\documentclass[12pt,reqno]{amsart}

\usepackage[margin=1.15in]{geometry}

\usepackage{amsmath,amscd,amssymb,amsfonts,latexsym,wasysym, mathrsfs, mathtools,hhline,xcolor}
\usepackage[all, cmtip]{xy}
\usepackage{csquotes}
\usepackage{url}
\usepackage{centernot}

\definecolor{hot}{RGB}{65,105,225}

\usepackage[pagebackref=true,colorlinks=true, linkcolor=hot ,  citecolor=hot, urlcolor=hot]{hyperref}

\usepackage{ textcomp }
\usepackage{ tipa }
\usepackage{tikz}
\usetikzlibrary{arrows.meta}

\usepackage{graphicx}
\usepackage[shortlabels]{enumitem}

\newlist{kotschickconditions}{enumerate}{1}
\setlist[kotschickconditions]{
  label=(\Alph*),
  ref=(\Alph*),
  leftmargin=*,
  itemsep=0pt
}

\newcommand{\condref}[1]{%
  \hyperref[cond:#1]{\ref*{cond:#1}}%
}

\theoremstyle{plain}
\newtheorem{theorem}{Theorem}
\newtheorem{proposition}[theorem]{Proposition}

\newtheorem{lm}[theorem]{Lemma}

\newtheorem{corollary}[theorem]{Corollary}

\newtheorem{lemma}[theorem]{Lemma}
\newtheorem{thrm}[theorem]{Theorem}

\theoremstyle{definition}

\newtheorem{defn}[theorem]{Definition}

\newtheorem{remark}[theorem]{Remark}

\newtheorem{ex}[theorem]{Example}
\newtheorem*{ex*}{Example}

\newtheorem*{conjecture*}{Conjecture}
\def\be{\begin{equation}}
\def\ee{\end{equation}}

\def\bt{\begin{thrm}}
\def\et{\end{thrm}}

\def\bc{\begin{cor}}
\def\ec{\end{cor}}

\def\br{\begin{rmk}}
\def\er{\end{rmk}}

\def\bp{\begin{prop}}
\def\ep{\end{prop}}

\def\bl{\begin{lm}}
\def\el{\end{lm}}

\def\bex{\begin{ex}}
\def\eex{\end{ex}}

\def\bd{\begin{defn}}
\def\ed{\end{defn}}

\DeclareMathOperator{\Bl}{Bl}

\def\bP{\mathbb{P}}

\def\bQ{\mathbb{Q}}

\def\bZ{\mathbb{Z}}

\def \Z{\mathbb Z}
\def \R{\mathbb R}
\def \Q{\mathbb Q}

\def \C{\mathbb C}

\title[The topology of K\"ahler manifolds]{The topology of K\"ahler manifolds and \\ 1-forms without zeros}

\author{Maur\'icio Corr\^ea}
\address{Universit\`a degli Studi di Bari Aldo Moro, Dipartimento di Matematica, Via Edoardo Orabona 4, 70125 Bari, Italy.}
\email{mauricio.barros@uniba.it}

\author{J\'anos Koll\'ar}
\address{Princeton University, Department of Mathematics, Fine Hall, Washington Road, Princeton, NJ 08544-1000, USA.}
\email{kollar@math.princeton.edu}

\author{Stefan Schreieder}
\address{Leibniz Universit\"at Hannover, Institut f\"ur Algebraische Geometrie, Welfengarten 1, 30167 Hannover, Germany.}
\email{schreieder@math.uni-hannover.de}

\author{Botong Wang}
\address{Department of Mathematics, University of Wisconsin-Madison,  480 Lincoln Drive, Madison, WI 53706-1388, USA.}
\email{wang@math.wisc.edu}
\date{\today}

\hypersetup{
  pdftitle={The topology of K\"ahler manifolds and 1-forms without zeros},
  pdfauthor={Maur\'icio Corr\^ea, J\'anos Koll\'ar, Stefan Schreieder and Botong Wang},
  pdfkeywords={Topology of varieties, Kotschick conjecture, Fibration over the circle, Holomorphic one-form, Albanese morphism, Aomoto complex.} 
}
 
\subjclass[2020]{Primary 14D06, 14F45; Secondary 32Q55, 32S15, 55R10.}

\keywords{Topology of varieties, Kotschick conjecture, Fibration over the circle, Holomorphic one-form, Albanese morphism, Aomoto complex.}

\begin{document}

\begin{abstract}  
We study how the topology and geometry of a compact Kähler manifold relate to the zeros of its one-forms. We determine all implications among several closely related conditions. In particular, we construct a smooth projective variety for which the Aomoto complex is exact for every nonzero holomorphic one-form and every semisimple local system, although every closed real one-form, and hence every holomorphic one-form, has a zero.
\end{abstract}

\maketitle

\section{Introduction}

In many questions concerning compact K\"ahler manifolds, including Iitaka’s conjectures, those admitting a smooth holomorphic morphism to an abelian variety arise as extremal cases that are especially difficult to handle \cite{CH02, CH07, CH11, CP17, HPS18}. 

In 2013 Kotschick asked whether
this class can be characterized in purely topological terms.
In various forms,  positive answers were given by \cite{K,S, HS,pietig} in dimensions $\leq 3$.
These led to the expectation that, for a compact K\"ahler manifold  $X$,  the following properties might be equivalent.
\begin{enumerate}
\item[(1$^{\rm hol}$)] $X$ admits a smooth, holomorphic  morphism to 
an Abelian variety of positive dimension.
\item[(2$^{\rm hol}$)] $X$ admits a holomorphic one-form without zeros.\label{cond:A}
\item[(3$^{\rm hol}$)] There is a holomorphic one-form $\omega$ on $X$ such that for every semisimple $\mathbb{C}$-local system $V$ of finite rank on $X$, the following Aomoto complex\footnote{See \cite{MR2799182} for background on Aomoto complexes and \cite{BudurWang} for the framework of differential graded Lie algebra pairs.
} is exact
\begin{equation}\label{eq:Aomoto}
H^0(X,V) \xrightarrow{\wedge\omega} H^1(X,V)  \xrightarrow{\wedge\omega} H^2(X,V)  \xrightarrow{\wedge\omega} \cdots.
\end{equation}
\item[(1$^{\rm top}$)] $X$ admits a $C^\infty$  submersion to  a positive-dimensional torus (hence to a circle).
\item[(2$^{\rm top}$)] $X$ admits a real, closed one-form without zeros.\label{cond:B}
\item[(3$^{\rm top}$)] There exists  $\omega\in H^1(X,\mathbb C) $ such that for every semisimple $\mathbb{C}$-local system $V$ of finite rank on $X$, the Aomoto complex \eqref{eq:Aomoto} is exact.
\end{enumerate}

In \cite{invisible}, it is shown that (2$^{\rm hol}$) is strictly stronger than (2$^{\rm top}$). In this note, we completely characterize all implications among these six properties. 

\begin{theorem}\label{thm:implications}
The above six properties are related as follows:
\[
(1^{\rm hol})\Longrightarrow (2^{\rm hol}) \Longrightarrow (1^{\rm top}) \Longleftrightarrow (2^{\rm top}) \Longrightarrow (3^{\rm top})\Longleftrightarrow (3^{\rm hol}).
\]
Moreover, none of the one-way implications can be reversed.
\end{theorem}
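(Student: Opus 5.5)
I will first establish the forward implications, then give a counterexample for each one-way arrow.

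\emph{Forward implications.}
For (1$^{\rm hol}$)$\Rightarrow$(2$^{\rm hol}$), let $f\colon X\to A$ be smooth. Pull back a nonzero translation-invariant holomorphic one-form $\alpha$ on $A$. Since $df$ is surjective at every point, $f^*\alpha$ vanishes nowhere.

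For (2$^{\rm hol}$)$\Rightarrow$(2$^{\rm top}$), if $\omega$ is holomorphic without zeros, then $\mathrm{Re}\,\omega$ is closed (holomorphic forms on compact K\"ahler manifolds are closed). It has no zeros, because $\omega=\mathrm{Re}\,\omega - i\,\mathrm{Re}\,\omega\circ J$.

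The equivalence (1$^{\rm top}$)$\Leftrightarrow$(2$^{\rm top}$) is Tischler's theorem. One direction: perturb a nowhere-zero closed form to a nearby one with rational periods, which is still nowhere zero, and rescale it to integral periods, giving a submersion to $S^1$. Conversely, pull back $d\theta$ along a submersion to the circle.

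For (2$^{\rm top}$)$\Rightarrow$(3$^{\rm top}$), take $\eta$ real, closed and nowhere vanishing. Twisting the de Rham complex of $V$ by $t\eta$ gives the local system $V\otimes L_{t}$, which has no cohomology for generic $t$ (Novikov/Morse theory on a fibration over the circle). Semisimple local systems are formal, in the sense of the DGL-pair framework of Simpson and Budur--Wang. By formality, the tangent cone to the cohomology jump loci at $V$ along direction $[\eta]$ is computed by the Aomoto complex. Vanishing of $H^*(V\otimes L_{t\eta})$ for small $t\ne0$ then forces exactness of the Aomoto complex for $\omega=[\eta]$.

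For (3$^{\rm top}$)$\Leftrightarrow$(3$^{\rm hol}$), it suffices to treat the nontrivial direction. Using Hodge decomposition $\omega=\omega^{1,0}+\omega^{0,1}$ and the fact that jump loci of semisimple local systems are unions of torsion-translated subtori (Simpson), the set of $\omega$ with exact Aomoto complex is the complement of a finite union of linear subspaces. Each of these subspaces is defined over $\mathbb Q$ and is Hodge-compatible. Hence if some $\omega$ lies in the complement, its $(1,0)$ part does too; the conjugate-linear symmetry handles the $(0,1)$ part.

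\emph{Non-reversibility.}
(2$^{\rm hol}$)$\not\Rightarrow$(1$^{\rm hol}$): take $X=C\times E$ blown-up-free examples in which $\omega$ has no zeros but no smooth map to an abelian variety exists. Natural candidates are a non-isotrivial-type construction, or a variety with $h^{1,0}=1$ whose Albanese map is not smooth but which has a zero-free form; I expect to adapt the known threefold examples. (2$^{\rm top}$)$\not\Rightarrow$(2$^{\rm hol}$) is the example of \cite{invisible}.

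The main obstacle is (3$^{\rm top}$)$\not\Rightarrow$(2$^{\rm top}$). Here I would build $X$ as a smooth projective variety with a morphism to an abelian variety whose fibers have vanishing Euler characteristic in a strong sheaf-theoretic sense: the perverse pushforwards $Rf_*V$ should have generic vanishing with no cohomology-support loci through generic directions. This would make the Aomoto complex exact via the Budur--Wang computation. At the same time I would build in a topological obstruction to any nowhere-zero closed real form, for instance a Sigma-invariant or Novikov homology obstruction with non-semisimple coefficients, computed through a specific fibration with singular fibers. A first attempt is a product or quotient of a curve with a surface carefully chosen so that all one-forms pull back from a curve of genus $\ge2$. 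Then every closed one-form has a zero, by Euler characteristic of the curve-fiber combination or by Novikov homology over $\mathbb Z$ not detected by semisimple systems. Meanwhile semisimple twisted cohomology still vanishes.
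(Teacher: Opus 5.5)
Your forward implications are essentially fine. However, the non-reversibility half of the theorem is not proved, and your step $(3^{\rm top})\Rightarrow(3^{\rm hol})$ rests on a false claim.

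\textbf{The non-reversibility is missing.} Of the three one-way arrows, you handle only the reversal of $(2^{\rm hol})\Rightarrow(1^{\rm top})$, by citing \cite{invisible}.

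For $(2^{\rm hol})\not\Rightarrow(1^{\rm hol})$, your candidates cannot work. $C\times E$ satisfies $(1^{\rm hol})$ via the projection to $E$. If $h^{1,0}(X)=1$, every holomorphic one-form is a multiple of $\mathrm{alb}^*dz$, so a zero-free one makes the Albanese map to the elliptic curve a submersion. One needs $h^{1,0}\geq 2$ together with an obstruction, e.g.\ the blow-up of $E_1\times E_2\times\mathbb P^1$ along $E_1\times 0\times 0\cup 0\times E_2\times\infty$ with $E_1,E_2$ non-isogenous \cite{SY}.

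For $(3^{\rm top})\not\Rightarrow(2^{\rm top})$, the main content of the theorem, your first attempt already fails $(3^{\rm top})$ for $V=\mathbb C_X$. Suppose $H^1(X,\mathbb C)=f^*H^1(B,\mathbb C)$ with $g(B)\geq 2$, and let $\omega=f^*\alpha$. Then $f^*\ker(\wedge\alpha\colon H^1(B)\to H^2(B))$ has dimension $\geq 2g-1\geq 3$ and lies in $\ker(\wedge\omega)$ on $H^1(X,\mathbb C)$. The image of $H^0$ is only the line $\mathbb C\omega$, so the Aomoto complex is not exact in degree $1$.

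Your instinct that the obstruction must be integral and invisible to semisimple coefficients is right. The missing idea is a variety that is a product over an elliptic curve rationally but not integrally. The paper takes $X=\Bl_S(\mathbb P^4\times E)$, where $S=(F\times C)/\langle\sigma\rangle\to E$ has double fibres.
\begin{itemize}
\item Rationally: $\pi_1(X)\cong\mathbb Z^2$ and $Rh_*\mathbb Q_X$ is a sum of shifted constant sheaves. Hence $H^*(X,V)$ is either $0$ or free over $H^*(E,\mathbb C)$, and every Aomoto complex with $\omega\neq0$ is exact.
\item Integrally: suppose $X\to S^1$ were a fibration, and pass to the induced infinite cyclic covers $\tilde h\colon\widetilde X\to\widetilde E$. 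Then $R^4\tilde h_*\mathbb Z$ fails to be locally constant at infinitely many points. By Proposition~\ref{prop:D-finite}, $H^*(\widetilde X,\mathbb Z)$ is not finitely generated. This contradicts $\widetilde X$ being homotopy equivalent to the compact fibre.
\end{itemize}

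\textbf{The step $(3^{\rm top})\Rightarrow(3^{\rm hol})$ is wrong as stated.} It is not true that $\omega\notin R(X,V)$ implies $\omega^{1,0}\notin R(X,V)$. Take $X=C\times E$ with $g(C)=2$ and $V=\mathbb C_X$. Künneth for Aomoto complexes gives $R(X,\mathbb C_X)=\mathrm{pr}_1^*H^1(C,\mathbb C)$. Then $\omega=\mathrm{pr}_1^*a+\mathrm{pr}_2^*b$, with $0\neq a\in H^{1,0}(C)$ and $0\neq b\in H^{0,1}(E)$, lies outside $R(X,\mathbb C_X)$, while $\omega^{1,0}=\mathrm{pr}_1^*a$ lies inside.

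The repair uses only what you already have:
\begin{itemize}
\item Each component $S$ of each $R(X,V)$ is a sub-Hodge structure, and it is proper because it misses the given $\omega$.
\item $S$ is stable under conjugation, so $S\supseteq H^{1,0}$ would force $S=H^1(X,\mathbb C)$. Hence $S\cap H^{1,0}$ is a proper subspace of $H^{1,0}$.
\item There are countably many such subspaces, so a very general holomorphic form avoids all of them.
\end{itemize}

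\textbf{Your $(2^{\rm top})\Rightarrow(3^{\rm top})$ argument is a correct and more direct alternative to the paper's.} Take $\eta=f^*d\vartheta$ after Tischler. Leray gives $H^*(X,V\otimes\exp(t[\eta]))=0$ for small $t\neq 0$. Since $R(X,V)$ is a cone and $\exp$ induces $R(X,V)_{(0)}\cong\Sigma(X,V)_{(1)}$, it follows that $[\eta]\notin R(X,V)$ for all semisimple $V$ at once. You need this germ isomorphism here, not merely equality of tangent cones. The paper instead extracts only properness of each $R(X,V)$, and then needs Theorem~\ref{sub-Hodge} plus a countability argument to produce a very general $\omega$.
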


\begin{remark}
In Properties (3$^{\rm hol}$) and (3$^{\rm top}$), we require the local systems $V$ to be semisimple since non-abelian Hodge theory (see \cite{Simpson, BudurWang}) only applies to semisimple local systems. This assumption is also necessary for some of the implications. Indeed, if arbitrary local systems are allowed, then both implications $(1^{\rm top})\Rightarrow (3^{\rm hol})$ and $(1^{\rm top})\Rightarrow (3^{\rm top})$ fail. For example, let $X$ be an elliptic curve. For any $\omega\in H^1(X,\mathbb{C})$, let $V_\omega$ be the extension of $\mathbb{C}_X$ by itself corresponding to $\omega$. Then the Aomoto complex $(H^*(X, V_{\omega}), \wedge \omega)$ has a nontrivial cohomology group in degree 0. Thus, $X$ satisfies $(1^{\rm top})$ but satisfies neither $(3^{\rm top})$ nor $(3^{\rm hol})$ when arbitrary local systems are allowed.
\end{remark}

The implications $(1^{\rm hol})\Rightarrow (2^{\rm hol}) \Rightarrow (2^{\rm top})$ and $(3^{\rm hol})\Rightarrow (3^{\rm top})$ are obvious. The equivalence $(1^{\rm top}) \Leftrightarrow (2^{\rm top})$ follows from Tischler's theorem (\cite{T}). We will prove the implications $(1^{\rm top})\Rightarrow (3^{\rm top})$ and $(3^{\rm top})\Rightarrow (3^{\rm hol})$ in Section \ref{sec:resonance}.


The fact that $(2^{\rm top})$ does not imply $(2^{\rm hol})$ was established in \cite{invisible}. The following example shows that $(2^{\rm hol})$ does not imply $(1^{\rm hol})$. 
\begin{ex*}\cite{SY}
Let $X$ be the blow-up of $E_1\times E_2\times \mathbb{P}^1$ along the union of $E_1\times 0\times 0$ and $0\times E_2\times \infty$, where $E_1$, $E_2$ denote non-isogenous elliptic curves. Then, a general holomorphic one-form on $X$ has no zeros, but any holomorphic map from $X$ to a positive-dimensional abelian variety is not smooth. 
\end{ex*}

The fact that $(3^{\rm hol})$ does not imply $(2^{\rm top})$ is proved as the following theorem. 

\begin{theorem}\label{thm:Aomoto-intro}
There is a smooth, complex, projective fivefold $X$ with $b_1(X)=2$ such that, for every semisimple local system $V$ of finite rank on $X$ and  nonzero $\omega\in H^0(X,\Omega_{X}^1)$, the Aomoto complex \eqref{eq:Aomoto} is exact, but every real closed one-form on $X$ has a zero. 
\end{theorem}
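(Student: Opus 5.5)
We need a construction. With $b_1(X)=2$, the Albanese is an elliptic curve $E$ and $H^0(\Omega^1)=\C\cdot\alpha$, where $\alpha$ is pulled back from $E$. I would take $X$ to be a smooth projective fivefold with a surjective Albanese map $f\colon X\to E$ and connected fibres, built as follows. Choose a family over $E$ whose fibres are smooth except over a finite set of points. At those points, arrange the singular fibres so that the fibres still look "topologically invisible" in cohomology, using the machinery of \cite{invisible}, which produces families over curves with singular fibres undetected by cohomology. At the same time, the singular fibres must force every real closed one-form to have a zero.

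For the second property, note that any real closed one-form $\eta$ on $X$ is cohomologous to $\mathrm{Re}(c\,\alpha)$ plus an exact form, since $H^1(X,\R)=f^*H^1(E,\R)$. If $\eta$ had no zeros, Tischler's theorem would give a fibration $X\to S^1$ whose fibre class is dual to $[\eta]$. I would obstruct this topologically. One option is the Novikov homology / Sikorav-type criterion: a nowhere-vanishing closed one-form forces the Novikov homology $H_*(X,\widehat{\Lambda}_{[\eta]})$ to vanish for all nonzero classes. So I would design the singular fibres so that some Novikov or $L^2$-type invariant of $X$ relative to $\pi_1$ is nonzero. A natural choice is a nonzero Euler characteristic, $\chi(X)\neq 0$, since a fibration over $S^1$ forces $\chi(X)=0$. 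However, $\chi(X)\ne0$ alone would also break Aomoto exactness for the trivial local system, because exactness of the Aomoto complex for $V=\C$ forces $\chi=0$.

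So the plan is to keep $\chi(X,V)=0$ for all $V$, but make the Novikov homology nonzero through torsion phenomena. Concretely, the singular fibres should carry cycles that are rationally trivial yet obstruct the fibration. An alternative is to use a non-semisimple local system, which is invisible to the Aomoto condition. This is exactly the gap identified in the Remark.

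For Aomoto exactness, I would use the results of Section~\ref{sec:resonance} together with the Budur--Wang/Simpson formality framework. For semisimple $V$, the Aomoto complex computes the derivative of the jump loci, i.e. the tangent cone to $\Sigma^i(X)$ at $V$ in the direction $\omega$. Exactness for all $V$ is then equivalent to $\alpha$ not being tangent to any component of the cohomology jump loci through any semisimple point. Via the decomposition theorem for $Rf_*$ of $V$, one has $H^*(X,V)$ computed from $H^*(E, R^if_*V)$. So it suffices to show that each perverse constituent of $Rf_*V$ on $E$ is a local system with no skyscraper summands, i.e. the singular fibres contribute no extra intersection cohomology. On $E$, wedge with $\alpha$ is exact on $H^*(E,L)$ for any local system $L$: either $L$ is nontrivial, giving zero cohomology, or $L$ is trivial and we are on a torus, where wedge with $\alpha\neq0$ is exact. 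Hence the Aomoto complex is exact for every semisimple $V$.

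The main obstacle is constructing the fibres so that, for every semisimple $V$, $Rf_*V$ has no summand supported at the singular points (``cohomologically invisible singular fibres,'' as in \cite{invisible}), while still keeping a genuine topological obstruction to a nonvanishing closed one-form. For the obstruction I would first try the \cite{invisible} example, whose Albanese fibration has invisible singular fibres, adapted so that $b_1=2$. I would then check that for every $[\eta]\ne0$ the Novikov homology is nonzero, via a non-semisimple or $\mathbb Z$-torsion argument at the singular fibre.
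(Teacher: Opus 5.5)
You have the right heuristic: the singular fibres must be invisible to the rational data that governs semisimple Aomoto complexes, yet detectable integrally. But the proposal never actually constructs $X$, and the one concrete candidate you name goes in the wrong direction. The example of \cite{invisible} is precisely the one showing $(2^{\rm top})\not\Rightarrow(2^{\rm hol})$, so it \emph{does} carry nowhere-vanishing closed real one-forms. Its singular fibres are invisible even with $\Z$-coefficients, which is what lets the fibration be smoothed; Theorem~\ref{thm:KP.rightway.thm} makes this dichotomy precise for simply connected fibres. Adapting it to $b_1=2$ cannot help.

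What you need are fibres that are $\Q$-invisible but not $\Z$-invisible. The paper gets them from multiple fibres. For $p\colon S=(F\times C)/\langle\sigma\rangle\to E$ of \S\ref{sec2}, the specialization $F\to F/\langle\sigma_1\rangle$ at a double fibre is an isomorphism on rational but not on integral cohomology. Hence $Rp_*\Q_S$ is a sum of shifted trivial local systems, while $R^2p_*\Z_S$ is not locally constant at the branch points. One then embeds $S\subset\bP^4\times E$ over $E$ (Lemma~\ref{lem:relative-generic-projection}, whence $\dim X=5$) and sets $X=\Bl_S(\bP^4\times E)$. Since $h\colon X\to E$ is an isomorphism on $\pi_1$, every semisimple $V$ is a sum of characters $h^*V_E$. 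So what you call the main obstacle reduces to computing $Rh_*\Q_X$, which by Lemma~\ref{lem:derived-blow-up-formula} is again a sum of shifted trivial local systems, and your argument on $E$ then finishes.

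Two smaller points rest on the Remark after Theorem~\ref{thm:implications}. First, your claim that $\wedge\alpha$ is exact on $H^*(E,L)$ for \emph{any} local system $L$ is false for non-semisimple $L$. Second, your non-semisimple alternative obstructs nothing: an elliptic curve fibres over $S^1$ yet already has non-exact Aomoto complexes with non-semisimple coefficients.

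The obstruction half is also only announced. ``Check that the Novikov homology is nonzero via a torsion argument'' is the entire content of that half, and you give no mechanism. The paper's argument runs as follows.
\begin{enumerate}[(a)]
\item Nonvanishing is an open condition, so Tischler's theorem gives a fibration $X\to S^1$, surjective on $\pi_1$, in a rational class near $[\eta]$. It is not necessarily in $[\eta]$ itself, as you wrote.
\item The associated infinite cyclic cover $\widetilde X$ is homotopy equivalent to the compact fibre, so $H^*(\widetilde X,\Z)$ is finitely generated.
\item The induced map $\widetilde h\colon\widetilde X\to\widetilde E$ to the corresponding cyclic cover of $E$ has infinitely many points, namely all points lying over the branch points of $p_C$, where $R^4\widetilde h_*\Z$ is not locally constant.
\item The Mayer--Vietoris argument of Lemma~\ref{lem:D-finite} and Proposition~\ref{prop:D-finite} shows this forces some $H^i(\widetilde X,\Z)$ to be infinitely generated, a contradiction.
\end{enumerate}
For a rational class, vanishing of Novikov homology is closely tied to exactly this finite generation. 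So a Novikov-based proof would still have to supply this same non-finite-generation input, which your proposal does not provide.
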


The proof of Theorem \ref{thm:Aomoto-intro} relies on the following positive answer to one of the topological versions of \cite[Question 27]{KP}.  

\begin{theorem} \label{thm:KP.rightway.thm}
Let $\Sigma$ be a (possibly non-compact) Riemann surface with $\pi_1(\Sigma)\neq 0$. 
Let $g\colon X\to \Sigma$ be a proper morphism between complex spaces. 
Assume that $g$ has simply connected fibers. 
Then the following are equivalent.
   \begin{enumerate}[(i)]
    \item $g$ is a $\Z$-homology fiber bundle. \label{item:KP.rightway.thm.1}
    \item $g$ is a homotopy fiber bundle.\label{item:KP.rightway.thm.2}
    \item  The universal cover $\widetilde X$ is homotopy equivalent to a finite CW complex. \label{item:KP.rightway.thm.3}
    \end{enumerate}
    If, in addition,   $\dim X\geq 4$ and $X$ is smooth, then these are also equivalent to
    \begin{enumerate}[(i)] \setcounter{enumi}{3}
    \item \label{item:KP.rightway.thm.4} There exists a proper $C^\infty$-fiber bundle $g'\colon X\to\Sigma$ which is homotopic to $g$ and agrees with $g$ outside a small neighborhood of the singular fibers of $g$. 
    \end{enumerate}
\end{theorem}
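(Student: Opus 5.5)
Throughout, $\Sigma^*\subset\Sigma$ denotes the complement of the discrete set of critical values of $g$, and $F$ a smooth fiber over $\Sigma^*$. The plan is to prove (ii)$\Rightarrow$(i)$\Rightarrow$(iii)$\Rightarrow$(ii), and then (iv)$\Leftrightarrow$(ii) under the extra hypotheses.

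\textbf{Reductions.} A proper holomorphic map to a curve is a locally trivial topological fibration over $\Sigma^*$. Near a critical value $s$ there is a small disc $D$ such that $g^{-1}(D)$ deformation retracts onto the singular fiber $X_s$; this uses the Łojasiewicz/triangulation results. Since $\pi_1(\Sigma)\neq 0$, the universal cover $\widetilde\Sigma$ is contractible; it is a disc or the plane, never $\mathbb P^1$. Because all fibers are simply connected, the long exact sequence shows that $\pi_1(X)\to\pi_1(\Sigma)$ is an isomorphism. For this one needs surjectivity of $\pi_1$ of a fiber onto that of a neighbourhood, and the deformation retraction gives it. Hence
\[
\widetilde X = X\times_\Sigma \widetilde\Sigma .
\]
This is a proper map to a contractible surface, with simply connected fibers and countably many singular fibers $X_{s_i}$.

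\textbf{The easy implications.} (ii)$\Rightarrow$(i) is immediate. For (i)$\Rightarrow$(iii), use the local models. Each inclusion $F\hookrightarrow g^{-1}(D)\simeq X_s$ is then a $\mathbb Z$-homology isomorphism between simply connected spaces, so it is a homotopy equivalence by Whitehead. Gluing these local pieces via Mayer--Vietoris and van Kampen shows that $\widetilde X\to\widetilde\Sigma$ is a quasifibration with fiber $F$. Thus $\widetilde X\simeq F$, which is a compact analytic space and hence a finite CW complex.

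\textbf{The main step, (iii)$\Rightarrow$(ii).} This is where the difficulty lies. If some $X_s$ is not a homotopy fiber, it is not a $\mathbb Z$-homology fiber either, by Whitehead. The task is then to show that $H_*(\widetilde X,\mathbb Z)$ cannot be finitely generated.

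The plan is to build $\widetilde\Sigma$ up over a contractible graph. I would take a tree $T$ through the preimages $s_i$ of the critical values, with $\widetilde X$ homotopy equivalent to the union of the pieces $X_{s_i}$ glued along copies of $F$. Consider the relative groups $H_*(g^{-1}(D_i),F)$, which are not all zero. I would show the inclusion $F\to X_{s_i}$ is injective on homology: since $X_{s_i}$ is simply connected, the vanishing cycle/specialization map $H_*(F)\to H_*(X_{s_i})$ is part of the Clemens--Schmid/Wang-type sequence, and over a disc this gives split behaviour rationally. Integrally, I would instead argue with the sequence of the pair $(g^{-1}(D),F)$ together with the monodromy.

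Then a Mayer--Vietoris argument over the tree expresses $H_*(\widetilde X)$ as containing $\bigoplus_i H_*(X_{s_i},F)$. Because the deck group $\pi_1(\Sigma)$ is infinite and acts freely on the set $\{s_i\}$, there are infinitely many such summands, each nonzero. Hence $H_*(\widetilde X)$ is not finitely generated, contradicting (iii). The delicate point is the injectivity and splitting needed to make the direct-sum statement valid over $\mathbb Z$. If this fails, I would fall back on a Euler characteristic count with field coefficients $\mathbb F_p$, using compact supports to handle the Euler characteristic of the infinite cover.

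\textbf{Smooth case, (ii)$\Leftrightarrow$(iv).} Assume $X$ is smooth with $\dim_{\mathbb C}X\ge 4$, so $\dim_{\mathbb R}F\ge 6$. Write $\partial g^{-1}(D)=g^{-1}(\partial D)$, the mapping torus of the monodromy on $F$. Then $g^{-1}(D)$ is a compact simply connected manifold whose boundary is this mapping torus, and it is homotopy equivalent to $F$ by (ii). Lift to the cover over $\partial D$, or equivalently cut along a fiber. We get an h-cobordism between $F\times[0,1]$-type pieces, and it is simply connected of dimension at least $6$. By the h-cobordism theorem, $g^{-1}(D)\cong F\times D$ compatibly with the boundary. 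Replacing $g$ on each $g^{-1}(D)$ by the resulting projection gives $g'$, which is homotopic to $g$ rel the boundary since $D$ is contractible. The converse (iv)$\Rightarrow$(ii) is trivial.
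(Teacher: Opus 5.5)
There is a genuine gap in your main step, (iii)$\Rightarrow$(ii). Your reductions, the implications (ii)$\Rightarrow$(i)$\Rightarrow$(iii), and the Whitehead reduction to showing that $H_*(\widetilde X,\Z)$ is not finitely generated are all fine. The problem is the mechanism you propose for that last step.

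The injectivity of $H_*(F)\to H_*(X_{s_i})$ is false in general, even with simply connected fibres. When smooth quadrics $\mathbb P^1\times\mathbb P^1$ degenerate to a quadric cone, the vanishing class $\ell_1-\ell_2\in H_2(F,\Z)$ maps to $0$ in $H_2(X_s,\Z)\cong\Z$. Clemens--Schmid does not rescue this. It is a rational statement about semistable K\"ahler degenerations, and it describes the kernel of specialization (the vanishing cycles) rather than excluding it.

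Your $\mathbb F_p$ Euler characteristic fallback also fails. The difference $\chi(X_s)-\chi(F)$ does not depend on the coefficients, and it vanishes whenever the defect $H_*(g^{-1}(D),F;\Z)$ is torsion. That is exactly the kind of failure that occurs over the branch points in Section~\ref{sec:Aomoto}: there $Rh_*\Q$ is a sum of trivial local systems, yet $R^4h_*\Z$ is not locally constant. So (iii)$\Rightarrow$(ii) is not established as written.

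What you are missing is that no injectivity or splitting is needed. Relative homology suffices, together with the fact that finite generation passes to subgroups, quotients and extensions. Cover $\widetilde\Sigma=V\cup\bigsqcup_iU_i$ as follows:
\begin{itemize}
\item $V$ is contractible and contains no critical values;
\item the $U_i$ are pairwise disjoint and each contains exactly one $s_i$;
\item $U_i$ and $U_i\cap V$ are contractible.
\end{itemize}
For example, take $\widetilde\Sigma\cong\R^2$, $s_i=(i,0)$, $V$ the complement of the rays $\{i\}\times(-\infty,0]$, and $U_i$ thin vertical strips. Then $\tilde g^{-1}(V)\simeq F$, and excision gives
\[
H_*\bigl(\widetilde X,\tilde g^{-1}(V)\bigr)\cong\bigoplus_i H_*\bigl(\tilde g^{-1}(U_i),\tilde g^{-1}(U_i\cap V)\bigr)\cong\bigoplus_i H_*\bigl(\tilde g^{-1}(D_i),F\bigr),
\]
where $D_i\subset U_i$ is a small disc around $s_i$. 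Since $H_*(F)$ is finitely generated, finite generation of $H_*(\widetilde X)$ forces all but finitely many summands to vanish. By deck invariance, all of them then vanish. This is the homological counterpart of the paper's proof of (iii)$\Rightarrow$(i), which instead uses the Leray spectral sequence of $\tilde g$, Artin vanishing on the Stein surface $\widetilde\Sigma$, and Lemma~\ref{lem:D-finite}.

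Separately, your (ii)$\Rightarrow$(iv) argument is incomplete. The relative h-cobordism theorem only gives a product structure on $g^{-1}(D)$ relative to one end and the sides. In other words, it shows the local monodromy is pseudo-isotopic to the identity. To make $g'$ agree with $g$ on all of $\partial g^{-1}(D)$, you additionally need Cerf's pseudo-isotopy theorem for simply connected $F$. That theorem requires $\dim_\R F\ge5$, i.e.\ $\dim_\C X\ge4$, whereas the h-cobordism theorem alone would only need $\dim_\C X\ge 3$. The paper does not prove this step itself; it cites \cite[Proposition~14]{invisible}.
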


\begin{remark}
The following was introduced in \cite{S}  to study Kotschick's question: 
\begin{enumerate}
\item[$(3^{\rm \acute{e}tale})$] There exists $\omega\in H^1(X, \mathbb{C})$ such that, for every finite \'etale cover $\pi\colon X'\to X$, the Aomoto complex
$$
H^0(X',\mathbb C) \xrightarrow{\wedge\pi^*\omega} H^1(X',\mathbb C)  \xrightarrow{\wedge\pi^*\omega} H^2(X',\mathbb C)  \xrightarrow{\wedge\pi^*\omega} \cdots
$$
is exact.
\end{enumerate}
It was proved in \cite{S} that $(1^{\rm hol})\Rightarrow (3^{\rm \acute{e}tale})$. Moreover, \cite{HS,pietig} proved that, when $\dim X\leq 3$, $(3^{\rm \acute{e}tale})$ implies $(2^{\rm top})$. Together, these results give a positive answer to Kotschick's question for K\"ahler manifolds of dimension at most three.

For any finite \'etale cover $\pi\colon X'\to X$, the local system $\pi_*(\mathbb{C}_{X'})$ is semisimple. Thus, by the isomorphism $H^i(X',\mathbb C)\cong H^i(X, \pi_*(\mathbb{C}_{X'}))$, Property $(3^{\rm hol})$ implies $(3^{\rm \acute{e}tale})$. Since Theorem \ref{thm:Aomoto-intro} shows that $(3^{\rm hol})$ does not imply $(2^{\rm top})$, it follows that $(3^{\rm \acute{e}tale})$ does not imply $(2^{\rm top})$ in higher dimensions. 
\end{remark}

The paper is organized as follows. Section~\ref{sec:resonance} studies resonance varieties and uses non-abelian Hodge theory to complete the implications in Theorem~\ref{thm:implications}. Section~\ref{sec:KP-topological} proves the topological equisingularity theorem (Theorem~\ref{thm:KP.rightway.thm}). Section~\ref{sec:Aomoto} constructs the counterexample in Theorem~\ref{thm:Aomoto-intro}, and analyzes its foliated geometry.

\section{Resonance varieties}\label{sec:resonance}
Given a topological space $X$ and a local system $V$ on $X$, we define the associated \emph{resonance variety} $R(X, V)$ to be the set of classes $\alpha\in H^1(X, \mathbb{C})$ for which the complex
\[
H^0(X, V)\xrightarrow{\wedge \alpha} H^1(X, V)\xrightarrow{\wedge \alpha} H^2(X, V)\xrightarrow{\wedge \alpha} \cdots
\]
is not exact. 
In the literature, resonance varieties usually refer to the loci $R^i_k(X, V)$ consisting of classes $\alpha\in H^1(X, \mathbb{C})$ for which the $i$-th cohomology of the above complex has dimension at least $k$. They are topological invariants of $(X, V)$ that connect geometry and combinatorics (see e.g. \cite{Falk, DimcaPapadima}). Our $R(X, V)$ is the union of $R^i_1(X, V)$ for all $i$. 

For an arbitrary topological space $X$ and local system $V$, $R(X, V)$ is a cone but need not have any further special geometric structure. However, when $X$ is a compact K\"ahler manifold and $V$ is semisimple, non-abelian Hodge theory imposes strong constraints on the geometry of $R(X, V)$. 

\begin{theorem}\label{sub-Hodge}
    Assume that $X$ is a compact K\"ahler manifold and $V$ is a semisimple local system of finite rank on $X$. Then, the resonance variety $R(X, V)$ is a finite union of sub-Hodge structures in $H^1(X, \mathbb{C})$. 
\end{theorem}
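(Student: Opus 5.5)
The idea is to identify the Aomoto complex with the tangent-space data of the moduli of local systems at $V$, and then use that this moduli space carries rank-one ``translation'' loci whose tangent directions are Hodge-theoretic.

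\emph{Step 1: reduction to a jump-locus problem.} Consider rank-one local systems $L_t=\exp(t\alpha)$ for $\alpha\in H^1(X,\mathbb{C})$, and the cohomology jump loci
\[
\Sigma^i_k(V)=\{\rho\in \mathrm{Char}(X)=H^1(X,\mathbb{C}^*) : \dim H^i(X,V\otimes L_\rho)\ge k\}.
\]
By the Budur--Wang theory of differential graded Lie algebra pairs \cite{BudurWang}, for $V$ semisimple on compact K\"ahler $X$ the relevant dgl pair is formal. This gives an isomorphism of analytic germs between $\Sigma^i_k(V)$ at the trivial character and the resonance germ $R^i_k(X,V)$ at $0$. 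Here $R^i_k$ is defined through the Aomoto complex $(H^\bullet(X,V),\wedge\alpha)$, and since $R^i_k$ is a cone, this local isomorphism identifies $R^i_k(X,V)$ globally with the tangent cone of $\Sigma^i_k(V)$ at $1$.

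\emph{Step 2: structure of the jump loci.} By Simpson's results \cite{Simpson}, extended to twisting a semisimple $V$ (Budur--Wang), each $\Sigma^i_k(V)$ is a finite union of torsion translates of subtori. Moreover, the subtori arise as images of $\mathrm{Char}$ under pullback along holomorphic maps to complex tori, or are compatible with the Hodge decomposition; in Simpson's language they are ``triple tori''. The components passing through $1$ are therefore subtori $\exp(W)$. Each such $W\subset H^1(X,\mathbb{C})$ is defined over $\mathbb{Q}$, being the tangent space of a subtorus of $H^1(X,\mathbb{C}^*)$ containing a Zariski-dense set of torsion points. Each $W$ is also stable under the Hodge decomposition, since it is closed in the Dolbeault and de Rham realizations compatibly (the triple-torus property). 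Hence $W$ is a rational sub-Hodge structure.

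\emph{Step 3: conclusion.} The tangent cone at $1$ of a finite union of translated subtori through $1$ is the union of their tangent spaces $W$. So $R^i_1(X,V)$ is a finite union of sub-Hodge structures. Taking the union over $i$, which is finite because $H^i=0$ for $i>2\dim X$, gives the same for $R(X,V)$.

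The main obstacle is Step 1. It requires that the germ of $\Sigma^i_k(V)$ at $1$ be governed by the linear Aomoto complex, with no higher Massey-type corrections. This is exactly formality of the pair $(\mathrm{End}(V)\text{-valued forms}, V\text{-valued forms})$, which uses harmonic theory for semisimple $V$ and the $\partial\bar\partial$-lemma for flat bundles with harmonic metric. I would cite this from \cite{BudurWang} rather than reprove it. The Hodge stability in Step 2 also needs care: one must check that the triple-torus property for $V\otimes L$ (rather than for $L$ alone) holds, by running Simpson's argument on the moduli of rank-one twists.
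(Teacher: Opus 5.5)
Your proposal is correct and is essentially the paper's proof: it combines the Budur--Wang germ isomorphism $R(X,V)_{(0)}\cong\Sigma(X,V)_{(1)}$ with the linearity of the twisted support loci, and the paper cites Schnell's Theorem~7.3 for exactly the ``$V\otimes L$'' input you flag. As a result, the components through $1$ are images $\operatorname{Im}(\varphi^*)$ for maps $\varphi\colon X\to A$ to complex tori, and their tangent spaces $\varphi^*H^1(A,\mathbb{C})$ are sub-Hodge structures. One small slip: for an arbitrary semisimple $V$ the components need not be \emph{torsion} translates (a non-torsion rank-one $V$ on an elliptic curve gives $\Sigma(X,V)=\{V^{-1}\}$), but this is harmless since only the components through $1$ enter the argument.
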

\begin{proof}
    The resonance variety $R(X, V)$ is closely related to the cohomology support locus, whose definition we now recall. Let $\mathrm{M}_B(X, 1)$ denote the moduli space of rank-one local systems on $X$, which is a commutative algebraic group. We define the \emph{cohomology support locus} of $V$ by
    \[
    \Sigma(X, V)\coloneqq \{L\in \mathrm{M}_B(X, 1)\mid H^i(X, V\otimes L)\neq 0 \text{ for some $i$}\}.
    \]
    By \cite[Theorem 7.3]{Schnell}, $\Sigma(X, V)$ is a finite union of linear subvarieties.\footnote{In \cite{Schnell}, the result assumes that $X$ is projective. However, the argument also generalizes to the compact K\"ahler case (see also \cite{Arapura2}). The idea is that $\mathrm{M}_B(X, 1)$ has a natural hyperK\"ahler structure and $\Sigma(X, V)$ is a quaternion subvariety, which implies that each of its irreducible components is a linear subvariety.} In other words, up to a translate, every irreducible component of $\Sigma(X, V)$ is of the form 
    \[
    \operatorname{Image}\left(\varphi^*\colon \mathrm{M}_B(A, 1)\to \mathrm{M}_B(X, 1)\right)
    \]
    where $\varphi\colon X\to A$ is a map to a compact complex torus.\footnote{When $X$ is a smooth projective variety, $A$ can be chosen to be an abelian variety.}

    Notice that the Lie algebra of the algebraic group $\mathrm{M}_B(X, 1)$ is naturally isomorphic to $H^1(X, \mathbb{C})$. Hence there is an exponential map $\exp\colon  H^1(X, \mathbb{C})\to \mathrm{M}_B(X, 1)$. By \cite[Theorem 1.4]{BudurWang}, the exponential map induces an isomorphism of analytic germs
    \begin{equation}\label{eq:germ}
    R(X, V)_{(0)}\cong \Sigma(X, V)_{(1)}.
    \end{equation}
    Since $R(X, V)$ is a cone, the linearity of $\Sigma(X, V)$ in the previous paragraph implies that every irreducible component of $R(X, V)$ is a sub-Hodge structure in $H^1(X, \mathbb{C})$. 
\end{proof}

\begin{corollary}
  For any compact K\"ahler manifold $X$, \textup{$(1^{\rm top}) \Rightarrow (3^{\rm top})$.}
\end{corollary}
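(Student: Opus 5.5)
Assume $(1^{\rm top})$ holds. By Tischler's theorem we may take a closed real one-form $\eta$ on $X$ without zeros; its class $[\eta]\in H^1(X,\mathbb{R})$ is nonzero. The plan is to show that for every semisimple $V$ the class $\alpha=[\eta]$ (or a suitable real class near it) lies outside $R(X,V)$. Then we pick one $\omega$ that avoids all the resonance varieties simultaneously.

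\textbf{Step 1: the Novikov-type vanishing.} For a closed one-form $\eta$ without zeros, $X$ fibers smoothly over $S^1$ after perturbing $\eta$ to a rational class. Indeed, the set of classes represented by nowhere-vanishing closed forms is open in $H^1(X,\mathbb{R})$, so it contains rational classes. Then $X$ is a mapping torus $F\times[0,1]/\phi$. For the rank-one local system $L_t=\exp(t\alpha)$, the cohomology $H^*(X,V\otimes L_t)$ can be computed by the Wang sequence. It vanishes unless $e^{t\cdot(\text{period})}$ is an eigenvalue of monodromy on $H^*(F,V|_F)$. Since $H^*(F,V|_F)$ is finite dimensional, only finitely many values of $e^{t}$ are excluded. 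Hence for all $t\neq 0$ small, $L_t\notin\Sigma(X,V)$. Thus the line $\mathbb{C}\alpha$ meets $\Sigma(X,V)$ near $1$ only possibly at $1$, i.e.\ the germ of $\exp(\mathbb{C}\alpha)$ is not contained in $\Sigma(X,V)_{(1)}$. By \eqref{eq:germ} and the cone property, $\alpha\notin R(X,V)$. (I would double-check that the complex line, not only the real ray, is handled; the eigenvalue argument works for complex $t$ as well.)

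\textbf{Step 2: uniformity in $V$.} The difficulty is that $(3^{\rm top})$ requires a single $\omega$ working for \emph{all} semisimple $V$, of which there are uncountably many. Step 1 already gives this: the class $\alpha$ of a fixed fibration works for every $V$ at once, since the argument used only that $X\to S^1$ is a fiber bundle with compact fiber. Its fiber cohomology with coefficients in $V|_F$ is finite dimensional for any finite-rank $V$. So take $\omega=\alpha$, the rational class of the fibration. This requires no use of Theorem \ref{sub-Hodge} beyond the germ isomorphism \eqref{eq:germ}.

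\textbf{Main obstacle.} I expect the main obstacle to be the precise identification in Step 1 between exactness of the Aomoto complex at $\alpha$ and the isolatedness of $1$ along $\exp(\mathbb{C}\alpha)$ in $\Sigma(X,V)$. This needs the germ isomorphism of \cite[Theorem 1.4]{BudurWang}, which is where semisimplicity (formality of the dgl pair) enters. Given that isomorphism, the condition $\alpha\in R(X,V)$ forces $\mathbb{C}\alpha\subset R(X,V)$, because $R$ is a cone. It would then force $\exp(t\alpha)\in\Sigma(X,V)$ for all small $t$, contradicting the Wang-sequence vanishing.
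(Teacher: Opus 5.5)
Your argument is correct, and it takes a more direct route than the paper's. The paper uses a general rank-one local system $L_{S^1}$ on the circle only to show that $\Sigma(X,V)$ does not contain the whole identity component of $\mathrm{M}_B(X,1)$. It then deduces from \eqref{eq:germ} that $R(X,V)$ is a proper subset. Next it invokes Theorem~\ref{sub-Hodge} (linearity of cohomology support loci, so $R(X,V)$ is a finite union of proper sub-Hodge structures). Finally, countability of sub-Hodge structures yields a single very general $\omega$ avoiding every $R(X,V)$.

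You instead run the Leray/Wang computation along the one curve $t\mapsto \exp(t\alpha)$ with $\alpha=f^*[d\vartheta]$. Since $\exp(t\alpha)$ is pulled back from a rank-one system on $S^1$ with monodromy $e^{tc}$, the projection formula reduces vanishing of $H^*(X,V\otimes\exp(t\alpha))$ to $e^{-tc}$ avoiding the finitely many eigenvalues of the monodromy operators on $H^*(F,V|_F)$. This holds for all small $t\neq 0$. Combined with the cone property and \eqref{eq:germ}, it gives $\alpha\notin R(X,V)$ for every semisimple $V$ at once.

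What your route buys is an explicit witness, namely the fibration class itself, with uniformity in $V$ for free. It needs only the Budur--Wang germ isomorphism, not Schnell's linearity theorem, Zariski-closedness of $\Sigma(X,V)$, or the very-general argument. The paper's route gives the genericity statement that a very general class works. It also sets up Theorem~\ref{sub-Hodge}, which the paper needs anyway for $(3^{\rm top})\Rightarrow(3^{\rm hol})$.

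Two small points. First, the detour through Tischler's theorem and perturbation to a rational class is unnecessary: $(1^{\rm top})$ already gives a submersion to a torus, and composing with a coordinate projection gives a submersion $f\colon X\to S^1$. Second, you should state explicitly from the start that $\alpha$ is the class pulled back from this circle fibration, not an arbitrary class $[\eta]$. That is exactly what makes $\exp(t\alpha)$ a pullback from $S^1$ and the Wang sequence applicable.
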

\begin{proof}
    Assume that $f\colon X\to S^1$ is a smooth submersion. Then, for any local system $V$ on $X$, $R^if_*(V)$ is a local system for every $i$. Choose a general local system $L_{S^1}$ on $S^1$ such that 
    \[
    H^0(S^1, R^if_*(V)\otimes L_{S^1})=H^1(S^1, R^if_*(V)\otimes L_{S^1})=0
    \]
    for every $i$. 
    Then, by the Leray spectral sequence, 
    \[
    H^i(X, V\otimes f^*(L_{S^1}))=0
    \]
    for every $i$. 
    
    Notice that $f^*L_{S_1}$ corresponds to a point of $\mathrm{M}_B(X, 1)$ in the same connected component of the trivial local system. Therefore, $\Sigma(X, V)$ does not contain the whole connected component of $\mathrm{M}_B(X, 1)$ containing the trivial local system. Now, assume further that $V$ is semisimple. Then by \eqref{eq:germ}, $R(X, V)$ is a proper subset of $H^1(X, \mathbb{C})$. 

    By Theorem \ref{sub-Hodge}, for any semisimple local system $V$ on $X$, $R(X, V)$ is a finite union of proper sub-Hodge structures in $H^1(X, \mathbb{C})$. Since there are at most countably many sub-Hodge structures in $H^1(X, \mathbb{C})$, a very general $\omega\in H^1(X, \mathbb{C})$ satisfies the condition of $(3^{\rm top})$ for any semisimple local system $V$ on $X$. 
\end{proof}

\begin{corollary}
For any compact K\"ahler manifold $X$, \textup{$(3^{\rm top}) \Rightarrow (3^{\rm hol})$.}
\end{corollary}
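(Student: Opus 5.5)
Assume $(3^{\rm top})$ holds, witnessed by a class $\omega_0\in H^1(X,\mathbb{C})$. We want a holomorphic one-form $\omega\in H^0(X,\Omega^1_X)\subset H^1(X,\mathbb{C})$ that avoids $R(X,V)$ for every semisimple $V$ simultaneously. The plan is to use Theorem \ref{sub-Hodge}: each $R(X,V)$ is a finite union of sub-Hodge structures $W\subset H^1(X,\mathbb{C})$. The hypothesis says that $\omega_0\notin R(X,V)$ for every semisimple $V$. Hence every sub-Hodge structure $W$ occurring as a component of some $R(X,V)$ is a proper subspace of $H^1(X,\mathbb{C})$.

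The key step is linear algebra with Hodge structures. Let $W\subset H^1(X,\mathbb{C})$ be a sub-Hodge structure, meaning the complexification of a rational (or real) sub-Hodge structure $W_{\mathbb{Q}}\subset H^1(X,\mathbb{Q})$. Then $W=W^{1,0}\oplus W^{0,1}$ with $W^{1,0}=W\cap H^{1,0}$ and $W^{0,1}=\overline{W^{1,0}}$. If $W^{1,0}=H^{1,0}$, conjugation gives $W^{0,1}=H^{0,1}$, and so $W=H^1(X,\mathbb{C})$. Therefore, for each proper sub-Hodge structure $W$, the intersection $W\cap H^{1,0}$ is a proper linear subspace of $H^{1,0}=H^0(X,\Omega^1_X)$. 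I need to check that Theorem \ref{sub-Hodge} yields components that are genuinely stable under the Hodge decomposition and conjugation. That is what "sub-Hodge structure" means here: the linear subvarieties of $\mathrm{M}_B(X,1)$ come from pullbacks $\varphi^*$ along maps $X\to A$ to complex tori, and $\varphi^*H^1(A,\mathbb{C})$ is the complexification of a rational sub-Hodge structure.

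Finally, apply a countability argument. Sub-Hodge structures of $H^1(X,\mathbb{C})$ are determined by rational subspaces of $H^1(X,\mathbb{Q})$, so there are only countably many of them. Consequently, the union over all semisimple $V$ of the sets $R(X,V)\cap H^{1,0}$ is contained in a countable union of proper linear subspaces of $H^{1,0}$. A countable union of proper subspaces cannot cover a complex vector space, by Baire or measure considerations. So a very general $\omega\in H^0(X,\Omega^1_X)$ lies outside every $R(X,V)$, which is exactly $(3^{\rm hol})$.

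The only delicate point, which I expect to be the main obstacle, is the case $H^{1,0}=0$. Then every proper subspace of $H^1(X,\mathbb{C})$ meets $H^{1,0}$ trivially, and the argument degenerates. In that case $H^1(X,\mathbb{C})=0$, so $\omega_0=0$. But $0$ lies in $R(X,\mathbb{C}_X)$, since $H^0(X,\mathbb{C})\neq0$ and the complex with zero differential is not exact. This contradicts $(3^{\rm top})$, so $H^{1,0}\neq0$ and the argument above applies.
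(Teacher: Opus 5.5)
Your proposal is correct and follows essentially the same route as the paper. Theorem~\ref{sub-Hodge} makes each $R(X,V)$ a finite union of proper sub-Hodge structures, each of which meets $H^{1,0}(X)$ in a proper subspace, and since there are only countably many sub-Hodge structures a very general holomorphic one-form avoids them all. Your extra details (the conjugation argument, the rationality of $\varphi^*H^1(A,\mathbb{C})$ behind countability, and ruling out $H^{1,0}(X)=0$) make explicit points the paper leaves implicit.
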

\begin{proof}
    Assume that $X$ satisfies property $(3^{\rm top})$. Then, for any semisimple local system $V$ on $X$, the resonance variety $R(X, V)$ is a proper subset of $H^1(X, \mathbb{C})$. By Theorem~\ref{sub-Hodge}, $R(X, V)$ is a finite union of proper sub-Hodge structures. For each such sub-Hodge structure $S$, the intersection $S\cap H^{1,0}(X)$ is a proper linear subspace of $H^{1,0}(X)$. Since there are at most countably many sub-Hodge structures in $H^1(X, \mathbb{C})$, a very general element $\omega\in H^{1,0}(X)$ is not contained in any proper sub-Hodge structure of $H^1(X, \mathbb{C})$. Such an element $\omega$ satisfies the condition of $(3^{\rm hol})$ for any semisimple local system $V$ on $X$. 
\end{proof}

\section{A topological equisingularity theorem} \label{sec:KP-topological}
In this section we prove Theorem \ref{thm:KP.rightway.thm}.
 We will need the following proposition.

\begin{proposition} \label{prop:D-finite}
Let $X$ be a complex space and let $f\colon X\to \Sigma$ be a proper morphism to a Riemann surface $\Sigma$ such that $b_1(\Sigma)<\infty$.
Then the groups $H^i(X,\Z)$ are finitely generated if and only if there is a finite subset $D\subseteq \Sigma$ such that $f$ is a $\mathbb{Z}$-homology fiber bundle over $\Sigma\setminus D$.
\end{proposition}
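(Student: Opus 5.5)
Both directions rest on constructibility. Since $f$ is a proper morphism of complex spaces, the derived direct images $R^qf_*\Z$ are constructible sheaves on $\Sigma$. The discriminant set $D$ is the set of points where $R^qf_*\Z$ fails to be locally constant. It is a discrete closed subset, and there are only finitely many $q$ to consider, since $\dim X<\infty$ bounds the cohomological dimension of the fibers. Off $D$, each $R^qf_*\Z$ is a local system with finitely generated stalks, because the fibers are compact complex spaces and hence finite CW complexes. Being a $\Z$-homology fiber bundle over an open set $U$ should mean, by the paper's convention, that these sheaves are locally constant on $U$; one could also phrase it with $R^qf_*$ of every coefficient group, or homologically, and I would check that this agrees. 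So the statement reduces to: $H^i(X,\Z)$ is finitely generated for all $i$ if and only if $D$ is finite. I will use the Leray spectral sequence $E_2^{p,q}=H^p(\Sigma,R^qf_*\Z)\Rightarrow H^{p+q}(X,\Z)$. Because $\Sigma$ is a Riemann surface (cohomological dimension $2$, and $\leq1$ for constructible sheaves if $\Sigma$ is noncompact), this sequence has only finitely many nonzero columns.

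\emph{($\Leftarrow$)} Assume $D$ is finite. The surface $\Sigma$ has $b_1<\infty$; I would check that together with the other hypotheses this makes $\Sigma\setminus D$ of finite topological type. Then $\Sigma$ is homotopy equivalent to a finite graph or a closed surface, and $\Sigma\setminus D$ is as well. For a local system $L$ of finitely generated abelian groups on a space of finite type, $H^p(\Sigma\setminus D,L)$ is finitely generated. Let $j\colon\Sigma\setminus D\hookrightarrow\Sigma$ be the inclusion. For each $q$, the sheaf $R^qf_*\Z$ sits in an exact sequence
$$0\to j_!j^*R^qf_*\Z\to R^qf_*\Z\to \textstyle\bigoplus_{d\in D}(R^qf_*\Z)_d\to0,$$
and the stalks at $d\in D$ are $H^q(f^{-1}(d),\Z)$, which are finitely generated. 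Also $H^p(\Sigma,j_!L)=H^p_c$-type cohomology of $\Sigma\setminus D$ relative to small punctured discs around $D$, so it is finitely generated. Hence every $E_2^{p,q}$ is finitely generated, and so is the abutment.

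\emph{($\Rightarrow$)} This is the main obstacle. Suppose $D$ is infinite. Then $D$ accumulates only at the ends of $\Sigma$, and $\Sigma$ is necessarily noncompact. I want to exhibit some $H^i(X,\Z)$ that is not finitely generated. Take the smallest $q$ such that $R^qf_*\Z$ fails to be locally constant at infinitely many points $d_1,d_2,\dots$, or at least take some such $q$. Near each $d_k$, choose a small disc $\Delta_k$ and compare the stalk at $d_k$ with the nearby fiber. The failure of local constancy is detected in the cohomology of the pair $(f^{-1}(\Delta_k),f^{-1}(\Delta_k^*))$ for $\Delta_k^*=\Delta_k\setminus\{d_k\}$: either a vanishing cycle is present, or the specialization map is non-surjective or non-injective.

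The plan is to Mayer–Vietoris-glue $X$ from $f^{-1}(\Sigma\setminus\bigcup\Delta_k)$ and the tubes $f^{-1}(\Delta_k)$. This should produce infinitely many independent classes in $H^{q}$ or $H^{q+1}$ of $X$. Alternatively I would argue with the local cohomology $H^1_{\{d_k\}}$ contributions in the Leray sequence: each discriminant point contributes a nonzero finitely generated summand to $E_2^{1,q}$ or $E_2^{0,q}$. I would then use the finiteness of $b_1(\Sigma)$, which gives finitely generated cohomology of $\Sigma$ with constant coefficients, to show that only finitely many of these summands can be killed, either by differentials or by the global sections of the local system. The delicate point is controlling the differentials and the torsion, so that infinitely many nonzero local defects survive into $H^*(X)$. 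I expect to handle this with the lowest-degree $q$ trick: below $q$ the sheaves are locally constant except at finitely many points, and so they are under control by ($\Leftarrow$)-type arguments.
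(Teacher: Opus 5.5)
Your $(\Leftarrow)$ argument is fine. Running Leray on all of $X$ via $0\to j_!j^*R^qf_*\Z\to R^qf_*\Z\to\bigoplus_{d\in D}(R^qf_*\Z)_d\to0$ is a harmless variant of the paper's Leray over $\Sigma\setminus D$ followed by Mayer--Vietoris.

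The gap is $(\Rightarrow)$, which remains a plan whose decisive step is left open. First, the ``delicate point'' about differentials is not one. If $D$ is infinite, then $\Sigma$ is noncompact, hence Stein, and Artin vanishing gives $H^p(\Sigma,R^qf_*\Z)=0$ for $p\geq 2$. With only two columns the Leray spectral sequence degenerates. So $H^i(X,\Z)$ is finitely generated for all $i$ if and only if all $H^0(\Sigma,R^qf_*\Z)$ and $H^1(\Sigma,R^qf_*\Z)$ are. No lowest-degree trick is needed. What is needed is a lemma about a single sheaf $\mathcal F$ with finitely generated stalks: if it fails to be locally constant at infinitely many points, then $H^0(\Sigma,\mathcal F)$ or $H^1(\Sigma,\mathcal F)$ is not finitely generated.

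Neither mechanism you suggest proves this lemma.
\begin{itemize}
\item Local cohomology can miss the defect entirely. Suppose near $d$ one has $\mathcal F=j_*\mathcal L$, with $j$ the inclusion of the punctured disc and nontrivial monodromy $T$. Then $H^0_{\{d\}}(\mathcal F)=H^1_{\{d\}}(\mathcal F)=0$ and $H^2_{\{d\}}(\mathcal F)=\operatorname{coker}(T-1)$. This last group vanishes, for example, for the order-six monodromy of $R^1f_*\Z$ at a cuspidal (type II) fiber of an elliptic fibration.
\item Mayer--Vietoris with tubes over discs $\Delta_k$ and annular overlaps fails for a related reason. It only compares stalks with monodromy invariants, which is an isomorphism in that example. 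Moreover, the complementary piece over $\Sigma\setminus\bigcup_k\Delta_k$ has infinitely many boundary circles. Its cohomology is therefore itself infinitely generated and can absorb all local contributions, and $b_1(\Sigma)<\infty$ gives no control over it.
\end{itemize}

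The missing idea is the paper's choice of cover.
\begin{itemize}
\item Discard the maximal skyscraper subsheaf. If it has infinitely many nonzero stalks, $H^0$ is already not finitely generated.
\item Use $b_1(\Sigma)<\infty$ and Mayer--Vietoris along a thickened tree $\Gamma\supset D$ to reduce to the case where $\Sigma$ is contractible.
\item Write $\Sigma=V\cup\bigsqcup_i U_i$, with $U_i$ a disc around $d_i$, $V$ connected and disjoint from $D$ (e.g.\ remove a ray from each $d_i$ to infinity), and every $V\cap U_i$ contractible.
\end{itemize}
Now $H^0(U_i,\mathcal F)\to H^0(V\cap U_i,\mathcal F)$ compares with the full nearby stalk, and it is non-surjective for every $i$. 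Otherwise $\mathcal L=\mathcal F|_{U_i\setminus\{d_i\}}$ would be trivial, and the injective map $\mathcal F|_{U_i}\to j_*\mathcal L$ would be an isomorphism. Meanwhile $H^0(V,\mathcal F)$ is finitely generated. Mayer--Vietoris then embeds into $H^1(\Sigma,\mathcal F)$ an infinite product of nonzero groups modulo a finitely generated subgroup. This quotient is uncountable, so $H^1(\Sigma,\mathcal F)$ is not finitely generated.
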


\begin{proof}
There is a smallest discrete subset $D\subseteq \Sigma$ such that the sheaves $R^if_*\Z_X$ are local systems on $\Sigma\setminus D$.
If $D$ is finite, then the groups
$$
H^j\bigl(\Sigma\setminus D,R^if_*\Z_X\bigr)
$$
are finitely generated, and hence so are the groups
$$
H^j\bigl(X\setminus f^{-1}(D),\mathbb{Z}\bigr)
$$
by the Leray spectral sequence.
The Mayer--Vietoris sequence now gives finite generation of the groups $H^j(X,\mathbb{Z})$.

If $D$ is infinite, then $\Sigma$ is noncompact, hence Stein. 
Artin's vanishing theorem for constructible sheaves on Stein manifolds therefore gives
$$
H^j\bigl(\Sigma,R^if_*\Z_X\bigr)=0 \qquad\text{for }j>1,
$$
see \cite[Theorem~10.3.8]{KS90}. 
Thus all differentials in the Leray spectral sequence vanish.
Therefore, the groups $H^i(X,\Z)$ are finitely generated if and only if each group
$$
H^j\bigl(\Sigma,R^if_*\Z_X\bigr)
$$
is finitely generated.
The proposition therefore follows from Lemma \ref{lem:D-finite} below.
\end{proof}

\begin{lemma} \label{lem:D-finite}
Let $\Sigma$ be a connected orientable topological surface with $b_1(\Sigma)<\infty$.
Let ${\mathcal F}$ be a sheaf of abelian groups on $\Sigma$ whose stalks are finitely generated. 
Let
$$
\Sigma^\circ=\{x\in \Sigma \mid {\mathcal F}\text{ is a local system on a neighborhood of }x\}
$$
and put $D=\Sigma\setminus \Sigma^\circ$.  Assume that $D$ is discrete.  Then the groups
$H^i(\Sigma,\mathcal{F})$ are finitely generated for every $i$ if and only if $D$ is finite. 
\end{lemma}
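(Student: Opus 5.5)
Both directions go through the excision/long exact sequence for the pair $(\Sigma,\Sigma^\circ)$, or rather for a small neighbourhood of $D$. Let $U=\bigsqcup_{x\in D}U_x$ be a disjoint union of small open discs around the points of $D$. Shrink them so that $\mathcal F$ is a local system on each punctured disc $U_x\setminus\{x\}$; this is possible because $D$ is discrete.

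\textbf{Case $D$ finite.} Since $b_1(\Sigma)<\infty$, $\Sigma$ is homotopy equivalent to a finite CW complex (a compact surface minus finitely many points or discs). Hence $\Sigma^\circ=\Sigma\setminus D$ is also of finite type, and cohomology of the local system $\mathcal F|_{\Sigma^\circ}$ with finitely generated stalks is finitely generated. On each disc we have $H^i(U_x,\mathcal F)=\mathcal F_x$ for $i=0$ and $0$ otherwise, since $U_x$ contracts to $x$ for a constructible-type sheaf of this kind. The punctured discs have finitely generated cohomology, being circles. Mayer--Vietoris for $\Sigma=\Sigma^\circ\cup U$ then gives finite generation of $H^i(\Sigma,\mathcal F)$.

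\textbf{Case $D$ infinite.} Now $\Sigma$ is noncompact. Each $x\in D$ contributes a nonzero local "defect" group, and we show these contributions inject into $H^*(\Sigma,\mathcal F)$ modulo a finitely generated error. Consider the sequence
\[
H^i(\Sigma,\mathcal F)\to H^i(U,\mathcal F)\oplus H^i(\Sigma^\circ,\mathcal F)\to H^i(U\setminus D,\mathcal F).
\]
Here $H^i(U,\mathcal F)=\prod_x H^i(U_x,\mathcal F)$ and $H^i(U\setminus D,\mathcal F)=\prod_x H^i(U_x\setminus x,\mathcal F)$. The fact that $x\in D$ means $\mathcal F$ is not a local system near $x$. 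Equivalently, writing $L$ for the local system on $U_x\setminus x$ with stalk $M$ and monodromy $T$, the map $\mathcal F_x\to H^0(U_x\setminus x,\mathcal F)=M^T$ is not an isomorphism, or $H^1$ must otherwise differ. A cleaner route is local cohomology: set $\Gamma_x := H^*_{\{x\}}(U_x,\mathcal F)$, which is nonzero exactly when $x\in D$. Since a sheaf $\mathcal F$ on $U_x$ with $\mathcal F$ locally constant on the punctured disc is a local system iff $H^*_{\{x\}}(U_x,\mathcal F)$ equals $H^*_c$ of a local system in degree $2$, I would instead use the condition "$\mathcal F$ restricted to $U_x$ is not the extension of a local system". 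Then excision gives $H^i_D(\Sigma,\mathcal F)=\prod_{x\in D}H^i_{\{x\}}(U_x,\mathcal F)$, and the sequence
\[
H^{i-1}(\Sigma^\circ,\mathcal F)\to H^i_D(\Sigma,\mathcal F)\to H^i(\Sigma,\mathcal F)
\]
holds. The middle term is an infinite product of nonzero groups in some degree $i\in\{0,1,2\}$, so it is not finitely generated, even not countable.

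The hard step is bounding the left term when $D$ is infinite. In that case $\Sigma^\circ$ is a surface of infinite type, and $H^{i-1}(\Sigma^\circ,\mathcal F)$ can itself be huge. I would instead run the argument the other way: assume $H^i(\Sigma,\mathcal F)$ finitely generated, pick a compact subsurface $K\subset\Sigma$ with $\Sigma\setminus K$ a finite union of ends, each a half-open annulus (possible since $b_1<\infty$), and restrict to one end $E\cong S^1\times[0,\infty)$ containing infinitely many points of $D$. Using Artin vanishing ($\Sigma$ Stein, so $H^2=0$), I would compare with a locally constant extension on $E$. Concretely, by the "defect" at each $x$ I expect to produce infinitely many independent classes, either in $H^0$ via sections supported near $x$, or in $H^1$ via classes of small loops around $x$ which survive because they are detected by a cut along a ray. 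Each such class restricts nontrivially to the disc $U_x$ and is killed elsewhere, which gives an injection of $\bigoplus_{x\in D\cap E}(\text{nonzero group})$ into $H^0$ or $H^1$ of $\Sigma$, contradicting finite generation. Making this detection argument precise is where I expect the real work to be.
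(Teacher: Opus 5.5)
You handle the direction ``$D$ finite $\Rightarrow$ all $H^i(\Sigma,\mathcal F)$ finitely generated'' correctly. The converse is the real content of the lemma, and there your proposal has a genuine gap: the route you start on cannot work as set up, and your final paragraph only describes what one would like to detect.

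It is false that $H^*_{\{x\}}(U_x,\mathcal F)\neq 0$ exactly when $x\in D$. If $\mathcal F$ is a local system near $x$, then $H^2_{\{x\}}(U_x,\mathcal F)\cong\mathcal F_x$. So the infinite product $H^*_D(\Sigma,\mathcal F)$ does not see the defect at all. For example, for $\mathcal F=\mathbb Z_{\mathbb R^2}$ and any infinite discrete $D$, the map $H^1(\mathbb R^2\setminus D,\mathbb Z)\to H^2_D(\mathbb R^2,\mathbb Z)=\prod\mathbb Z$ is an isomorphism while $H^*(\mathbb R^2,\mathbb Z)$ is finitely generated. As you observe yourself, the term $H^{i-1}(\Sigma^\circ,\mathcal F)$ is uncontrolled. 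The same problem defeats any Mayer--Vietoris argument for $\Sigma=\Sigma^\circ\cup U$: the intersections are punctured discs, and the large open set has infinitely generated cohomology.

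The paper's proof supplies three ideas you are missing.
\begin{enumerate}
\item[(1)] Handle skyscrapers first. If the maximal skyscraper subsheaf of $\mathcal F$ has infinitely many nonzero stalks, then $H^0(\Sigma,\mathcal F)$ is already not finitely generated. Otherwise, quotient by it. This removes only finitely many points from $D$, changes cohomology only by finitely generated groups, and makes $\mathcal F|_{U_x}\to j_*(\mathcal F|_{U_x\setminus x})$ injective.
\item[(2)] Reduce to a contractible surface. Choose a closed embedded tree $\Gamma\supset D$ and apply Mayer--Vietoris to a contractible thickening $W_1$ of $\Gamma$ and $W_2=\Sigma\setminus\Gamma$. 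Both $W_2$ and $W_1\cap W_2$ are of finite type and miss $D$.
\item[(3)] Make your ``cut along a ray'' precise. Cover $\Sigma$ by disjoint discs $U_i\ni 0_i$ and a \emph{connected} open set $V\subset\Sigma\setminus D$ with each $V\cap U_i$ nonempty and contractible, e.g.\ $\mathbb R^2$ minus the rays $\{i\}\times(-\infty,0]$.
\end{enumerate}

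With this cover, three facts hold:
\begin{itemize}
\item $H^0(V,\mathcal F)$ is finitely generated.
\item $H^0(V\cap U_i,\mathcal F)$ is the full generic stalk, not just its monodromy invariants.
\item In the absence of skyscrapers, failure of local constancy at $0_i$ is equivalent to non-surjectivity of $H^0(U_i,\mathcal F)\to H^0(V\cap U_i,\mathcal F)$. Surjectivity would force trivial monodromy and make the injection $\mathcal F|_{U_i}\to j_*(\mathcal F|_{U_i\setminus 0_i})$ an isomorphism onto a constant sheaf.
\end{itemize}
The Mayer--Vietoris sequence then embeds $\bigl(\prod_i C_i\bigr)/\operatorname{im}H^0(V,\mathcal F)$ into $H^1(\Sigma,\mathcal F)$, where the $C_i\neq 0$ are these cokernels. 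The infinite product is uncountable and $H^0(V,\mathcal F)$ is finitely generated, so the quotient is uncountable and $H^1(\Sigma,\mathcal F)$ is not finitely generated. No Artin vanishing or end-by-end analysis is needed.
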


\begin{proof}
If $D$ is finite, then the claim is clear.
Conversely, assume that $D$ is infinite.
Let ${\mathcal F}_{sky}\subset {\mathcal F}$ be the maximal skyscraper subsheaf. 
If ${\mathcal F}_{sky}$ has infinitely many nonzero stalks, $H^0(\Sigma,\mathcal{F})$ is not finitely generated.
We may thus assume that this is not the case.
Then we replace ${\mathcal F}$ by ${\mathcal F}/{\mathcal F}_{sky}$ and assume that ${\mathcal F}$ does not contain skyscraper subsheaves.



Note that $D$ is closed and discrete, hence is countable and has no accumulation points in $\Sigma$.
In particular, we can write
$$
D=\{0_i\mid i=1,2,\ldots\}.
$$
Since $\Sigma$ is orientable and $b_1(\Sigma)<\infty$, it is given by attaching to a compact orientable surface (called compact core) with boundary an infinite annulus to each of the finitely many boundary components.
We can therefore choose a closed embedded tree $\Gamma\subset\Sigma$ which contains $D$ and is obtained from a finite tree contained in the compact core by attaching finitely many proper rays, one for each of the attached ends.  

Let $W_1$ be a sufficiently small open thickening of $\Gamma$, chosen
so that $\Gamma$ is a deformation retract of $W_1$, and put
$
W_2\coloneq\Sigma\setminus\Gamma.
$
Then $W_1$ is connected and contractible, while $W_2$ and $W_1\cap W_2$ have the homotopy type of finite CW complexes and do not contain any points of $D$.
The Mayer--Vietoris sequence therefore reduces us to the case where $\Sigma=W_1$ is a connected contractible surface without boundary.

We then choose an open covering
$\Sigma=V\cup\bigsqcup_{i=1}^\infty U_i$, where the $U_i$ are pairwise
disjoint open discs containing $0_i$, $V\cap U_i$ is nonempty and
contractible, and $V$ is connected and disjoint from $D$.
(For instance, after identifying $\Sigma$ with $\R^2$, we may take
$0_i=(i,0)$, $U_i=(i-\tfrac14,i+\tfrac14)\times\R$,
$R_i=\{i\}\times(-\infty,0]$, and
$V=\R^2\setminus\bigcup_{i\geq1}R_i$.
Then $\R^2=V\cup\bigcup_{i\geq1}U_i$, and each $V\cap U_i$ is nonempty
and contractible.)
By the definition of $\Sigma^\circ$ and $D$,
the sheaf ${\mathcal F}$ is locally constant outside of $D$, but ${\mathcal F}$ is not locally constant in the neighborhood of any of the points $0_i$.



\medskip

\textbf{Claim.} The map $H^0(U_i,{\mathcal F})\to H^0(V\cap U_i,{\mathcal F})$ is not surjective.
 
\begin{proof} 
Set
$$
j\colon U_i\setminus\{0_i\}\hookrightarrow U_i \qquad\text{and}\qquad \mathcal L\coloneq \mathcal F|_{U_i\setminus\{0_i\}}.
$$
There is a natural morphism
\begin{align} \label{eq:claim:map:1}
\mathcal F|_{U_i}\longrightarrow j_\ast \mathcal L.
\end{align}
Its kernel is supported at $0_i$, hence is a skyscraper subsheaf of $\mathcal F$. 
By our reduction, this kernel is zero.

Suppose, for contradiction, that
\begin{align} \label{eq:claim:map:2}
H^0(U_i,\mathcal F)\longrightarrow H^0(V\cap U_i,\mathcal F)
\end{align}
is surjective. 
Since $V\cap U_i$ is nonempty and contractible and $\mathcal F$ is locally constant there, its space of sections identifies with any stalk of $\mathcal L$ on $V\cap U_i$. 
Surjectivity of \eqref{eq:claim:map:2} thus implies that $\mathcal L$ is a trivial local system on $U_i\setminus\{0_i\}$.
But then $j_\ast \mathcal L$ is a trivial local system on $U_i$ and the injective map \eqref{eq:claim:map:1} is surjective by surjectivity of  \eqref{eq:claim:map:2}.
Hence, \eqref{eq:claim:map:1} is an isomorphism, which contradicts our assumptions.  
\end{proof}

Let $U=\bigsqcup_{i\geq 1} U_i$ and consider the Mayer--Vietoris sequence
$$
H^0(\Sigma,{\mathcal F}) \to \prod_{i=1}^{\infty} H^0(U_i,{\mathcal F}) \oplus H^0(V,{\mathcal F})\to \prod_{i=1}^{\infty} H^0(U_i\cap V,{\mathcal F})\to H^1(\Sigma,{\mathcal F}) .
$$
By the above claim, the cokernel of $\prod_{i=1}^{\infty} H^0(U_i,{\mathcal F}) \to \prod_{i=1}^{\infty} H^0(U_i\cap V,{\mathcal F})$ is not finitely generated.
Since ${\mathcal F}$ has no skyscraper subsheaf and the stalks of ${\mathcal F}$ are finitely generated,  $H^0(V,{\mathcal F})$ is finitely generated.
Hence, the above exact sequence shows that $H^1(\Sigma,{\mathcal F})$ is not finitely generated.
This concludes the proof of the lemma.
\end{proof}

\begin{proof}[Proof of Theorem \ref{thm:KP.rightway.thm}]
Item \ref{item:KP.rightway.thm.1} implies \ref{item:KP.rightway.thm.2} by \cite[Lemma~5(i)]{invisible}.
Since $\pi_1(\Sigma)\neq 0$, the universal cover of $\Sigma$ is contractible, and so \ref{item:KP.rightway.thm.2} implies \ref{item:KP.rightway.thm.3} by applying \cite[Lemma~5(ii)]{invisible} to the pullback of $g$ to the universal cover of $\Sigma$.

By a theorem of Smale \cite[Main Theorem]{Smale}, $g\colon X\to \Sigma$ induces an isomorphism on fundamental groups, because it has simply connected fibers.
Hence, $\widetilde X=X\times_{\Sigma}\widetilde \Sigma$, where $\widetilde X$ and $\widetilde \Sigma$ denote the universal covers of $X$ and $\Sigma$, respectively.
Since $\pi_1(\Sigma)\neq 0$, the classification of Riemann surfaces implies that $\widetilde \Sigma\to \Sigma$ has infinite degree.
Moreover, $b_1(\widetilde \Sigma)=0$. 
Therefore, item \ref{item:KP.rightway.thm.3} implies \ref{item:KP.rightway.thm.1} by Proposition \ref{prop:D-finite}, applied to the induced map $\widetilde X\to \widetilde \Sigma$.

Assume now that $X$ is smooth. 
By \cite[Proposition~14]{invisible}, item \ref{item:KP.rightway.thm.1} implies item \ref{item:KP.rightway.thm.4} if $\dim X\geq 4$. 
Conversely, if \ref{item:KP.rightway.thm.4} holds, then the universal cover $\widetilde X$ of $X$ is  a $C^\infty$-fiber bundle over the universal cover $\widetilde \Sigma$ of $\Sigma$.
Since $\Sigma$ is not simply connected, $\widetilde \Sigma$ is either $\C$ or the disc $\Delta$.
It follows that $\widetilde X$ is homotopy equivalent to a fiber of $\widetilde X\to \widetilde \Sigma$ and so \ref{item:KP.rightway.thm.3} holds. 
\end{proof}

\begin{remark}
When $\Sigma$ is compact, then the equivalence of \ref{item:KP.rightway.thm.1}, \ref{item:KP.rightway.thm.2} and \ref{item:KP.rightway.thm.3} in Theorem~\ref{thm:KP.rightway.thm} can be deduced from \cite[Propositions 5.5 and 5.6, and Lemma 5.9]{LMW}; see also \cite[Corollary 1.6]{LMW} for the case of elliptic curves.
\end{remark}
 
\section{One-forms without zeros and exactness of the Aomoto complex} \label{sec:Aomoto}

In this section we prove Theorem \ref{thm:Aomoto-intro}, stated in the introduction.

\subsection{Preliminaries}
We will use the following blow-up formula in the derived category.
For the analogous statement with $\ell$-adic coefficients on the
\'etale site, see \cite[Exp.~VII, Thm.~2.2.1 and Thm.~8.1]{SGA5}.
See also \cite[Thm.~7.31]{voisin} for the resulting decomposition of
integral cohomology groups.

\begin{lemma} 
\label{lem:derived-blow-up-formula}
Let $M$ be a complex manifold, let $\iota\colon Z\hookrightarrow M$ be a
closed complex submanifold of codimension $c$, and let
\[
\pi\colon M'=\Bl_ZM\longrightarrow M
\]
be the blow-up of $M$ along $Z$. Then there is an isomorphism
\begin{equation}\label{eq:derived-blow-up-formula}
R\pi_{*}\Z_{M'}
\simeq
\Z_M\oplus\bigoplus_{r=1}^{c-1}\iota_*\Z_Z[-2r]
\end{equation}
in the derived category of sheaves of abelian groups on $M$.
\end{lemma}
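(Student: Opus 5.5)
We construct a morphism $R\pi_*\Z_{M'}\to \Z_M\oplus\bigoplus_{r=1}^{c-1}\iota_*\Z_Z[-2r]$ that is defined globally, and then check that it is an isomorphism on stalks.

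Let $E=\pi^{-1}(Z)=\bP(N_{Z/M})$ be the exceptional divisor, with $j\colon E\hookrightarrow M'$ and $p=\pi|_E\colon E\to Z$, a $\bP^{c-1}$-bundle. Let $h\in H^2(E,\Z)$ be $c_1(\mathcal O_E(1))$; equivalently $-h=[E]|_E$, which is the restriction of the global class $[E]\in H^2(M',\Z)$. The first summand comes from adjunction, $\Z_M\to R\pi_*\pi^*\Z_M=R\pi_*\Z_{M'}$.

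For the other summands, we use the map $\pi^*\iota_*\Z_Z\to j_*\Z_E$ and the class $h^{r-1}$. Cup product with $h^{r-1}$ gives a morphism $\Z_E\to \Z_E[2r-2]$. The Gysin map $j_*\Z_E\to \Z_{M'}[2]$, coming from the Thom class of $E$, raises degree by $2$. Composing these and applying adjunction gives
\[
\iota_*\Z_Z[-2r]\to R\pi_*\Z_{M'}.
\]
Concretely, this is $\iota_*(Rp_*(\,\cdot\, h^{r-1})) $ followed by the Gysin pushforward, which lands in $H^{2r}$ of the fibers. It is cleaner to map in the direction given by the inclusion of summands; the sum of these maps is then a global morphism $\Phi$.

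To check that $\Phi$ is an isomorphism, it suffices to look at stalks, and by proper base change the stalk of $R\pi_*\Z_{M'}$ at $x$ is $H^*(\pi^{-1}(x),\Z)$. Off $Z$, $\pi$ is an isomorphism and only the $\Z_M$ summand is nonzero, so the claim is clear there. At $x\in Z$ the fiber is $\bP^{c-1}$, whose cohomology is $\Z[h]/h^c$. The map sends the generator in degree $2r$ to the Gysin image of $h^{r-1}$, which equals $\pm h^r$ restricted to the fiber, since $[E]|_E=-h$. These classes form a $\Z$-basis of $H^{2r}(\bP^{c-1},\Z)$ for $1\le r\le c-1$, and the degree-zero part is hit by the unit. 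Hence $\Phi$ is a quasi-isomorphism.

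The main obstacle is making the summand maps genuine morphisms in the derived category, rather than just maps on cohomology sheaves. The cleanest route is to write $\iota_*\Z_Z[-2r]\to R\pi_*\Z_{M'}$ as the composite
\[
\iota_*\Z_Z[-2r]\to \iota_*Rp_*\Z_E[-2r+2]\xrightarrow{\,h^{r-1}\,}\iota_*Rp_*\Z_E[0]\cdots,
\]
and equivalently as cup product on $R\pi_*\Z_{M'}$ by the global class $[E]^{r}\in H^{2r}(M',\Z)$. The latter is supported on $E$, so it gives $R\pi_*R\Gamma_E\Z_{M'}[2r]$, and one composes it with the adjunction morphism $\iota_*\Z_Z\to R\pi_*j_*\Z_E$. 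The cleanest version uses cup product with $[E]^r$, viewed as a morphism $\Z_{M'}\to \Z_{M'}[2r]$ that factors through local cohomology along $E$, together with $R\Gamma_E\Z_{M'}\simeq j_*\Z_E[-2]$, the Thom isomorphism. Having these morphisms globally, the stalkwise computation above finishes the proof.
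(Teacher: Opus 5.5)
Your proposal is correct and is essentially the paper's proof: you build the same morphism $\Phi$ (the pullback for the $\Z_M$ summand; for the others, pull back to $E$, cup with $c_1(\mathcal O_E(1))^{r-1}$, and apply the Gysin push-forward along $j$), and you check it stalkwise via proper base change, where your use of $j^*j_*\alpha=\alpha\cdot[E]|_E=-\alpha h$ spells out what the paper calls the projective-bundle formula on the fiber $\bP^{c-1}$. Two presentation slips need fixing: your opening sentence gives the morphism in the wrong direction, and the displayed composite in your last paragraph should read $\iota_*\Z_Z[-2r]\to\iota_*Rp_*\Z_E[-2r]\to\iota_*Rp_*\Z_E[-2]\to R\pi_*\Z_{M'}$.
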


\begin{proof}
Let $j\colon E\hookrightarrow M'$ be the exceptional divisor and let
$p\colon E=\bP(N_{Z/M})\to Z$ be its natural projection. The morphism in
\eqref{eq:derived-blow-up-formula} is obtained from the pullback
$ \Z_M\to R\pi_*\Z_{M'}$
together with, for $1\leq r\leq c-1$, the morphisms
$\iota_*\Z_Z[-2r]\to R\pi_*\Z_{M'}
$
defined by pulling back to $E$, taking cup product with
$c_1(\mathcal O_E(1))^{r-1}$, and applying the Gysin morphism for
$j\colon E\hookrightarrow M'$.

It suffices to check that the resulting morphism is an isomorphism on
stalks. 
Away from $Z$, this is clear. 
At a point of $Z$, the assertion is precisely the projective-bundle formula for the fiber $\bP^{c-1}$.
This proves \eqref{eq:derived-blow-up-formula}.
\end{proof}

We will also need the following generic projection lemma.

\begin{lemma}\label{lem:relative-generic-projection}
Let $B$ be a smooth curve and 
$
q\colon Y\to B
$ 
 a nonconstant morphism from a smooth projective variety of dimension $m$.
Then there is a closed embedding
$$
\iota\colon Y\hookrightarrow \mathbb P^{2m}\times B\qquad \text{such that}\quad \operatorname{pr}_2\circ\iota=q.
$$ 
\end{lemma}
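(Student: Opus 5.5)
Fix a projective embedding $Y\subseteq \mathbb P^N$ and take the graph-type closed embedding
\[
(\mathrm{incl},q)\colon Y\hookrightarrow \mathbb P^N\times B .
\]
It is closed because $Y$ is proper and $B$ is separated, and it is compatible with $\operatorname{pr}_2$. Everything then reduces to a generic linear projection $\mathbb P^N\dashrightarrow\mathbb P^{2m}$ from a center $L\cong\mathbb P^{N-2m-1}$ disjoint from $Y$. We consider the morphism
\[
\phi=(p_L,q)\colon Y\to \mathbb P^{2m}\times B .
\]
The second component of $\phi$ is $q$, so $\operatorname{pr}_2\circ\phi=q$ holds automatically. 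Moreover $\phi$ is proper, being a morphism from a projective variety. It therefore suffices to show that $\phi$ is injective and unramified, since a proper injective unramified morphism is a closed embedding.

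Injectivity only has to be checked on pairs of distinct points $y\neq y'$ with $q(y)=q(y')$, because $\phi$ already separates points with different images in $B$. Such pairs form the fiber product $Y\times_B Y$ minus the diagonal, which has dimension $2m-1$. The lines through such pairs sweep out a secant-type variety of dimension at most $2m$ in $\mathbb P^N$. Similarly, for the unramified condition we only need to handle tangent directions killed by $dq$. These are the vertical tangent spaces $T_y(Y_b)$, which have dimension $m-1$ at smooth points of fibers, but can be larger at critical points of $q$. Their union has dimension at most $m+(m-1)=2m-1$ generically, and the resulting tangent-line variety has dimension at most $2m$.

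A general linear center $L$ of dimension $N-2m-1$ misses any subvariety of $\mathbb P^N$ of dimension at most $2m$. Such an $L$ therefore misses $Y$, every relative secant line, and every relative tangent line. This gives injectivity and injectivity of $d\phi$.

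The main obstacle is the dimension count for vertical tangent directions at critical points of $q$. The relevant incidence variety is
\[
\{(y,v): v\in \ker(dq_y)\},
\]
and we must check that its projectivization has dimension $\le 2m-1$. Since $B$ is a curve, $\ker dq_y$ has dimension $m-1$ except on the critical locus $C$, where it is all of $T_yY$. On $C$ the contribution has dimension $\dim C+(m-1)$. This is $\le 2m-1$ because $C\neq Y$ (as $q$ is nonconstant and we are in characteristic zero), so $\dim C\le m-1$. The same care is needed for the secant count near the diagonal, but those limits are exactly the vertical tangent directions, which are already accounted for. With these bounds in place, the standard generic projection argument concludes the proof.
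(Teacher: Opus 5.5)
Your proposal is correct and takes the same approach as the paper. You pair $q$ with a general linear projection whose center misses the variety swept out by the relative secant lines (through $y\neq y'$ with $q(y)=q(y')$) and by the tangent lines in directions of $\ker dq_y$; you also supply the dimension count that the paper leaves as ``one checks'' (including $\dim\operatorname{Crit}(q)\le m-1$ by generic smoothness) and the criterion that a proper, injective, unramified morphism is a closed embedding.
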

\begin{proof}
Choose a closed embedding $j\colon Y\hookrightarrow\mathbb P^n$, with
$n$ sufficiently large.  
Let $\Sigma\subset\mathbb P^n$ be the closure of
the union of all lines $\overline{j(x)j(y)}$, where $x\neq y$ and
$q(x)=q(y)$, together with all tangent lines to $j(Y)$ corresponding to
nonzero vectors in $\ker(dq_y)$ for some $y\in Y$. 
One checks that $\dim \Sigma\leq 2m$.
Hence, if $p\colon \bP^n\dashrightarrow \bP^{2m}$ is a general linear projection, then $\iota\coloneq (p\circ j,q)$ has the properties we want. 
\end{proof}

\subsection{A rational cohomology torus}\label{sec2} 
We recall the Iitaka torus tower from \cite{DJL}.  

Let $E$ and $F$ be two elliptic curves, and let 
$$
p_C \colon C\longrightarrow E
$$ 
be a branched double cover of $E$. 
Let $\sigma_1\colon F\to F$ be an order-two automorphism defined by translating by a nonzero 2-torsion point, and denote the deck involution of $p_C$ by $\sigma_2$. 
Let $\sigma=(\sigma_1, \sigma_2)$ be the induced diagonal action on $F\times C$, and define
$$
S=(F\times C)/\langle \sigma\rangle.
$$
Since $\sigma$ is free, $S$ is a smooth projective surface. Let 
\begin{align} \label{eq:p-Aomoto}
p\colon S\longrightarrow C/\langle \sigma_2\rangle=E
\end{align}
be the natural projection. 
Then, away from the branch locus of $p_C$, the fibers of $p$ are isomorphic to $F$. 
Over the branch locus of $p_C$, $p$ has double fibers, and these fibers (with induced reduced scheme structure) are isomorphic to $F/\langle \sigma_1\rangle$. 
The important feature of $S$ is that $R^ip_* \bQ_S$ is a trivial local system on $E$ of rank $1$, $2$, $1$, when $i=0, 1, 2$, respectively, and for $i>2$, $R^ip_\ast \bQ_S=0$, cf.~Lemma \ref{lemma:pushforward} below. 
This implies that the Albanese map of $S$ induces an isomorphism of rational cohomology groups. In particular, the rational cohomology ring of $S$ is isomorphic to the rational cohomology ring of a 2-dimensional complex torus.
However, since $p$ has double fibers at the branch locus of $p_C$, $R^2p_* \bZ_S$ is not a local system near those points. 

Geometrically, $p$ is a locally trivial fibration with fiber $F$ away from the branch locus of $p_C$. 
Since locally in $S$, the map $p$ factors through the ramified cover $p_C\colon C\to E$, $p$ fails to be a submersion precisely over the branch locus of $p_C$. 
 
\subsection{Construction of the example}\label{sec:example}
Let $N\ge 3$ be such that there is a closed embedding $\iota \colon S\to \bP^N\times E$ which lifts $p\colon S\to E$ from \eqref{eq:p-Aomoto}.
Via this map we consider $S$ as a subvariety of $\bP^N\times E$. 
Let 
\begin{align} \label{def:X-1}
X\coloneq \Bl_S(\bP^N\times E),
\end{align}
which is a smooth projective variety of dimension $N+1$.
We further consider the projection
$$
h\colon X\longrightarrow E
$$
which agrees with the Albanese map of $X$ and induces an isomorphism on fundamental groups.  

In the remainder of this section, $X$ will always denote the blow-up in \eqref{def:X-1} and we will show that it satisfies the conclusions in Theorem \ref{thm:Aomoto-intro}.

\subsection{Cohomology of the Aomoto complexes}\label{sec4}
In this section, we prove the following proposition, which shows that $X$ satisfies an even stronger condition than Properties $(3^{\rm hol})$ and $(3^{\rm top})$. 

\begin{proposition}\label{prop:vanishing}
Let $V$ be any semisimple local system on $X$ and assume $\omega\in H^1(X, \mathbb{C})$ is nonzero. 
Then the Aomoto complex
$$
(H^*(X, V), \wedge\omega)\coloneqq H^0({X}, V)\xrightarrow{\wedge \omega} H^1({X}, V)\xrightarrow{\wedge \omega}H^2({X}, V)\xrightarrow{\wedge \omega}\cdots 
$$
is exact. 
\end{proposition}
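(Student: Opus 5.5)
Since $\pi_1(X)\cong\pi_1(E)\cong\Z^2$ is abelian and $V$ is semisimple, $V$ is a direct sum of rank-one local systems. Each of them is of the form $h^*L$ for a rank-one local system $L$ on $E$, because $h$ induces an isomorphism on fundamental groups. The Aomoto complex is additive in $V$, so it suffices to treat $V=h^*L$. Moreover $H^1(X,\C)=h^*H^1(E,\C)$, so $\omega=h^*\eta$ for a nonzero $\eta\in H^1(E,\C)$. The key point is that
\[
H^*(X,h^*L)=H^*\bigl(E,Rh_*\C_X\otimes L\bigr),
\]
and under this identification cup product with $h^*\eta$ corresponds to cup product with $\eta\in H^1(E,\C)$ on hypercohomology over $E$. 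The latter operation is natural in the coefficient complex. Hence any direct-sum decomposition of $Rh_*\C_X$ in the derived category of $E$ yields a decomposition of the Aomoto complex into a direct sum of subcomplexes. This avoids any explicit check that Gysin maps commute with $\wedge\omega$.

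Let $\pi\colon X\to\bP^N\times E$ be the blow-up and $\mathrm{pr}_2\colon\bP^N\times E\to E$ the projection, so that $h=\mathrm{pr}_2\circ\pi$. Applying $R\mathrm{pr}_{2*}$ to Lemma~\ref{lem:derived-blow-up-formula}, tensored with $\C$, gives
\[
Rh_*\C_X\simeq R\mathrm{pr}_{2*}\C_{\bP^N\times E}\oplus\bigoplus_{r=1}^{c-1}Rp_*\C_S[-2r],
\]
where $c=N-1$ is the codimension of $S$. Thus the Aomoto complex of $(X,h^*L,\omega)$ splits, up to shifts, into the Aomoto complex of $(\bP^N\times E,\mathrm{pr}_2^*L,\mathrm{pr}_2^*\eta)$ and copies of that of $(S,p^*L,p^*\eta)$. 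It therefore suffices to prove exactness for these two pieces.

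For $\bP^N\times E$ we have $R\mathrm{pr}_{2*}\C\simeq\bigoplus_k\C_E[-2k]$, so the complex is a sum of shifted copies of $(H^*(E,L),\wedge\eta)$. If $L$ is nontrivial, then $H^*(E,L)=0$ and there is nothing to prove. If $L$ is trivial, then $H^*(E,\C)=\Lambda^*H^1(E,\C)$, and multiplication by a nonzero degree-one element of an exterior algebra is exact by the Koszul argument.

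For $S$, I use the facts recalled in Section~\ref{sec2}: the sheaves $R^ip_*\C_S$ are trivial local systems and the Albanese map induces an isomorphism of rational cohomology rings. If $L$ is nontrivial, then $R^ip_*(p^*L)\cong L\otimes R^ip_*\C_S$ is a direct sum of copies of $L$, so $H^j(E,R^ip_*p^*L)=0$ for all $i,j$. The Leray spectral sequence then gives $H^*(S,p^*L)=0$. If $L$ is trivial, then $H^*(S,\C)\cong\Lambda^*H^1(S,\C)$ as rings. Since $p$ is surjective, $p^*\eta\neq0$ in $H^1(S,\C)$, and the Koszul argument again gives exactness.

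The step requiring the most care is the compatibility of the splitting with $\wedge\omega$. I expect it to be handled cleanly by the observation above that the operator lives on the base $E$. The remaining care is in the Leray vanishing for $S$ with nontrivial $L$. This relies on $R^ip_*\C_S$ being trivial local systems (Lemma~\ref{lemma:pushforward}) even across the double fibers, which holds with rational or complex coefficients although it fails integrally.
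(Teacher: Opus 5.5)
Your argument is correct. Its skeleton matches the paper's proof. You reduce to $V=h^*L$ using $\pi_1(X)\cong\Z^2$, write $\omega=h^*\eta$, and use that $\wedge\eta$ on $H^*(E,-)$ is natural in the coefficient complex. Hence any splitting of $Rh_*\C_X$ in the derived category of $E$ splits the Aomoto complex. The paper uses this implicitly when it speaks of graded $H^*(E,\C)$-modules; you state it explicitly.

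The difference is how far you split.

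\textbf{The paper's route.} It applies Corollary~\ref{cor:sum}, so it also splits $Rp_*\bQ_S\cong\bQ_E\oplus\bQ_E[-1]^{\oplus 2}\oplus\bQ_E[-2]$ via Lemma~\ref{lemma:pushforward}, whose proof invokes the decomposition theorem. It concludes that $H^*(X,h^*L)$ is either zero or a free graded $H^*(E,\C)$-module. Exactness is then inherited from $(H^*(E,\C),\wedge\eta)$ in one step.

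\textbf{Your route.} You stop at the blow-up splitting of Lemma~\ref{lem:derived-blow-up-formula}. You treat each $S$-summand as the Aomoto complex of $(S,p^*L,p^*\eta)$ itself.
\begin{itemize}
\item For nontrivial $L$, you only need that the $R^ip_*\C_S$ are trivial local systems. Leray then gives vanishing with no degeneration or splitting argument.
\item For trivial $L$, you use the ring isomorphism $H^*(S,\C)\cong\Lambda^*H^1(S,\C)$, the injectivity of $p^*$ on $H^1$, and the Koszul argument.
\end{itemize}

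So you trade the decomposition theorem for the Albanese ring isomorphism for $S$, which the paper only states in passing. If you want it self-contained, it follows directly from
\[
H^*(S,\bQ)=\bigl(H^*(F,\bQ)\otimes H^*(C,\bQ)\bigr)^{\sigma}=H^*(F,\bQ)\otimes H^*(E,\bQ),
\]
since $\sigma_1$ is a translation and so acts trivially on $H^*(F,\bQ)$.
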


We begin with the following lemma.

\begin{lemma}\label{lemma:pushforward}
The map $p\colon S\to E$ satisfies:
\begin{equation}\label{eq:trivial LS}
Rp_* \bQ_S \cong \bQ_E\oplus \bQ_E[-1]^{\oplus 2}\oplus \bQ_E[-2].
\end{equation}
\end{lemma}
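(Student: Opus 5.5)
We will compute $Rp_*\bQ_S$ via the finite quotient map. Put $\tilde p=p_C\circ \mathrm{pr}_2\colon F\times C\to E$ and let $q\colon F\times C\to S$ be the quotient by $\langle\sigma\rangle$, so that $\tilde p=p\circ q$. Since $q$ is a finite étale double cover and we work with $\bQ$-coefficients, $\bQ_S=(q_*\bQ_{F\times C})^{\langle\sigma\rangle}$ is a direct summand of $q_*\bQ_{F\times C}$. Taking invariants is exact over $\bQ$, so
\[
Rp_*\bQ_S\cong \bigl(R\tilde p_*\bQ_{F\times C}\bigr)^{\langle\sigma\rangle}.
\]
Next, $\tilde p$ factors as $F\times C\to C\to E$. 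By Künneth, and because $p_C$ is finite,
\[
R\tilde p_*\bQ\cong p_{C*}\bQ_C\otimes H^*(F,\bQ)=\bigoplus_i p_{C*}\bQ_C\otimes H^i(F,\bQ)[-i],
\]
compatibly with the $\sigma=(\sigma_1,\sigma_2)$-actions.

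We then split into eigensheaves. We have $p_{C*}\bQ_C=\bQ_E\oplus\mathcal L$, where $\bQ_E$ is the $\sigma_2$-invariant part and $\mathcal L$ is the anti-invariant part. The sheaf $\mathcal L$ is $j_!$ or $j_*$ of a nontrivial rank-one local system on the complement of the branch locus; its stalk at a branch point is $0$, since the fiber there is a single point fixed by $\sigma_2$. On $H^*(F,\bQ)$, the translation $\sigma_1$ acts trivially, because translations are homotopic to the identity. Hence the $\sigma$-invariant part is exactly $\bQ_E\otimes H^*(F,\bQ)$, and this gives $\bQ_E\oplus\bQ_E[-1]^{\oplus2}\oplus\bQ_E[-2]$.

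The only point requiring care is the compatibility of the Künneth isomorphism with the diagonal action, and its validity as an isomorphism in the derived category rather than just on cohomology sheaves. Since $F\times C\to C$ is a trivial product, $R\mathrm{pr}_{2*}\bQ\cong\bigoplus_i H^i(F,\bQ)\otimes\bQ_C[-i]$ holds functorially. This formality is equivariant because $\sigma_1$ acts trivially on $H^*(F)$. Alternatively, one can argue directly: each $R^ip_*\bQ_S$ is computed as above, with stalks matching at the double fibers. Formality over a curve with constant cohomology sheaves then follows from the decomposition theorem for the projective morphism $p$ (or from degeneration), so $Rp_*\bQ_S\cong\bigoplus R^ip_*\bQ_S[-i]$.
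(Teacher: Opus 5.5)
Your proof is correct, but it takes a different route from the paper's.

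\textbf{The paper's route.} The paper argues intrinsically on $S$:
\begin{enumerate}[(1)]
\item It shows that $p$ is a $\bQ$-homology fiber bundle, by choosing the specialization map at a double fiber to be the \'etale double cover $F\to F/\langle\sigma_1\rangle$.
\item It observes that the monodromy over $E^\circ$ is by translations, hence trivial.
\item It extends this triviality to all of $E$, using that $\pi_1(E^\circ)\to\pi_1(E)$ is surjective.
\item It invokes the decomposition theorem to split $Rp_*\bQ_S$ into its shifted cohomology sheaves.
\end{enumerate}

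\textbf{Your route.} You pass to the \'etale cover $F\times C$, where everything is explicit:
\begin{enumerate}[(1)]
\item $Rp_*\bQ_S$ is the image of the idempotent $\tfrac12(1+\sigma)$ acting on $R\tilde p_*\bQ$.
\item Proper base change for the trivial family $F\times C\to C$ gives $R\mathrm{pr}_{2*}\bQ\cong\bigoplus_i H^i(F,\bQ)\otimes\bQ_C[-i]$.
\item $p_{C*}$ is exact because $p_C$ is finite.
\item $(p_{C*}\bQ_C)^{\sigma_2}=\bQ_E$, because $p_C$ is the quotient by $\sigma_2$.
\end{enumerate}
The point you flag is fine. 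The requirement is that $\sigma_1^*$ act as the identity on $R\Gamma(F,\bQ)$ in the derived category, not merely on cohomology. This holds because morphisms in $D(\bQ)$ between complexes of vector spaces are determined by their effect on cohomology. The equivariance under $\sigma_2$ is automatic, since the splitting is pulled back from a point.

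\textbf{What each approach buys.} Your route gives a self-contained proof with no decomposition theorem and no local analysis of the singular fibers. The splitting comes for free from formality over a field. It also shows exactly where rational coefficients enter, namely inverting $2$ to split off the invariants. This is consistent with the paper's remark that $R^2p_*\bZ_S$ fails to be a local system near the double fibers. The paper's route instead establishes the $\bQ$-homology fiber bundle property directly. That property is the conceptual counterpoint to the $\bZ$-homology failure that drives Section~\ref{sec:Aomoto}, and the argument would also work in settings without a global product cover.
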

\begin{proof}
    First, we prove that $p\colon S\to E$ is a $\Q$-homology fiber bundle. 
    In fact, away from the branch locus of $p_C$, $p$ is an $F$-fiber bundle. 
    Let $x\in E$ be a branch point of $p_C$, and let $x'\in E$ be a point different from but very close to $x$. 
    Then there is a specialization map between the fibers of $p$, $\mathrm{sp}\colon S_{x'}\to S_{x}$, well-defined up to homotopy. 
    This map can be chosen to be the finite \'etale double cover $F\to F/\langle \sigma_1 \rangle$. 
    Hence, it induces an isomorphism on rational (co-)homology.
    Therefore, $p\colon S\to E$ is a $\Q$-homology fiber bundle. 
    In particular, $R^ip_* \bQ_S$ is a local system for every $i$. 

    Away from the branch locus of $p_C$, $p$ is an $F$-fiber bundle, and the monodromy action is given by translation. 
    Thus, the restriction of $R^ip_* \bQ_S$ to $E^\circ\coloneq E\setminus \operatorname{Br}(p_C)$ is the trivial local system. 
    Since the inclusion $E^\circ\to E$ induces a surjective map on fundamental groups, $R^ip_* \bQ_S$ is a trivial local system for all $i$. 
    Thus, the isomorphism \eqref{eq:trivial LS} follows from the decomposition theorem \cite{BBD} and the fact that a general fiber of $p$ is an elliptic curve. 
\end{proof}

\begin{corollary}\label{cor:sum} 
There are $a_i\in \bZ_{\geq 0}$ with  
\[
R{h}_* \bQ_{X} \cong \bigoplus_{0\leq i\leq 2N}\bQ_{E}[-i]^{\oplus a_i} .
\] 
\end{corollary}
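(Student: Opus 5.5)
The plan is to factor $h$ as $X\xrightarrow{\pi}\bP^N\times E\xrightarrow{\mathrm{pr}_2}E$ and compute $Rh_*\bQ_X=R\mathrm{pr}_{2*}R\pi_*\bQ_X$ in two steps. By Lemma~\ref{lem:derived-blow-up-formula}, tensored with $\bQ$, we have
\[
R\pi_*\bQ_X\simeq \bQ_{\bP^N\times E}\oplus\bigoplus_{r=1}^{c-1}\iota_*\bQ_S[-2r],
\]
where $c=N+1-2=N-1$ is the codimension of $S$ in $\bP^N\times E$. Applying $R\mathrm{pr}_{2*}$ then gives
\[
Rh_*\bQ_X\simeq R\mathrm{pr}_{2*}\bQ_{\bP^N\times E}\oplus\bigoplus_{r=1}^{N-2}R(\mathrm{pr}_2\circ\iota)_*\bQ_S[-2r].
\]
Here we use that $\iota$ is a closed embedding, so $\iota_*=R\iota_*$.

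The two types of summands are handled separately. For the first, the projection $\mathrm{pr}_2$ is a trivial product, so the Künneth formula (or proper base change for a product) gives $R\mathrm{pr}_{2*}\bQ_{\bP^N\times E}\simeq\bigoplus_{k=0}^{N}\bQ_E[-2k]$. For the second, $\mathrm{pr}_2\circ\iota=p$ by construction, so Lemma~\ref{lemma:pushforward} gives
\[
Rp_*\bQ_S\simeq\bQ_E\oplus\bQ_E[-1]^{\oplus2}\oplus\bQ_E[-2].
\]
Shifting this by $[-2r]$ for $1\le r\le N-2$ yields sums of shifted constant sheaves in degrees between $2$ and $2N-2$.

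Collecting the terms gives the claimed decomposition, with $a_i$ equal to the number of summands in degree $i$. All shifts lie in $[0,2N]$, so the index range $0\le i\le 2N$ is correct.

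There is no serious obstacle. The only points to check are that the blow-up formula and the Künneth decomposition are genuine isomorphisms in the derived category rather than merely on cohomology sheaves. This holds by Lemma~\ref{lem:derived-blow-up-formula} and by the fact that direct images commute with finite direct sums.
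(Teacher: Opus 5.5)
Your proposal is correct and follows the paper's proof. Like the paper, you factor $h$ through the blow-up, apply the derived blow-up formula, and push forward using the product structure of $\bP^N\times E$ together with Lemma~\ref{lemma:pushforward} for $S$. The one difference is that you track the shifts explicitly to get the range $0\le i\le 2N$, where the paper simply cites that $h$ has relative dimension $N$.
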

\begin{proof}
Recall that ${h}\colon {X}\to {E}$ is equal to the composition of the blow-up map ${X}\to \bP^N\times {E}$ and the projection $\bP^N\times {E}\to {E}$. 
Taking the derived pushforward of $\bQ_{X}$ to $\bP^N\times {E}$, we obtain a direct sum of the constant sheaf $\bQ_{\bP^N\times {E}}$ with a direct sum of shifted trivial local systems on ${S}$, cf.~\eqref{eq:derived-blow-up-formula}. 
Note further that the derived pushforwards of $\bQ_{\bP^N\times {E}}$ and $\bQ_{S}$ to ${E}$ are direct sums of shifted trivial local systems on ${E}$. 
Hence, $R{h}_*(\bQ_{X})$ is isomorphic to a direct sum of shifted trivial local systems on ${E}$.
The precise version in the corollary follows because $h$ has relative dimension $N$. 
\end{proof}

\begin{proof}[Proof of Proposition \ref{prop:vanishing}]
Notice that $h: X\to E$ induces an isomorphism on the fundamental groups. Thus, $\pi_1(X)\cong \mathbb{Z}^2$ is abelian, and it suffices to consider rank-one local system $V=h^*V_E$. By Corollary~\ref{cor:sum} and the projection formula, 
\[
Rh_*(V)\cong \bigoplus_{0\leq j\leq 2N}V_{E}[-j]^{\oplus a_j}.
\]
If $V_E$ is nontrivial, then $H^*(E, V_E)=0$. Hence, $H^{*}(X, V)=0$, and the assertion follows. 

If $V_E$ is trivial, then the above decomposition gives
\[
H^*(X, V)\cong \bigoplus_{0\leq j\leq 2N}H^{*}(E, \mathbb{C})[-j]^{\oplus a_j}
\]
as graded $H^{*}(E, \mathbb{C})$-modules. In particular, $H^*(X, V)$ is a free graded $H^*(E, \mathbb{C})$-module. 

Since $h^*: H^1(E, \mathbb{C})\to H^1(X, \mathbb{C})$ is an isomorphism, we can write $\omega=h^*\alpha$ for some nonzero $\alpha\in H^1(E, \mathbb{C})$. Since the complex $(H^*(E, \mathbb{C}), \wedge \alpha)$ is exact, the complex $(H^*(X, V), \wedge \omega)$ is also exact. 
\end{proof}

\subsection{Proof of Theorem \ref{thm:Aomoto-intro}} 
\begin{proof}[Proof of Theorem \ref{thm:Aomoto-intro}]
Let $X$ be as in Section \ref{sec:example}.
By Lemma \ref{lem:relative-generic-projection}, we can choose $N=4$ and so $\dim X=5$.
Let $V$ be any semisimple local system on $X$. 
By Proposition \ref{prop:vanishing}, for any nonzero $\omega\in H^0(X,\Omega^1_{X})$, the Aomoto complex $(H^\ast(X,V),\wedge \omega)$ is exact.
It remains to show that any real closed one-form on $X$ has a zero.
Assume, for contradiction, that $X$ admits a real closed one-form without zeros.
Then $X$ fibers smoothly over $S^1$, see \cite{T}.  
We can pick such a $C^\infty$-fibration $X\to S^1$ such that $\pi_1(X)\to \pi_1(S^1)$ is surjective. 
Let $\widetilde X\to X$ be the infinite cyclic cover corresponding to the kernel of $\pi_1(X)\to \pi_1(S^1)$.
Then $\widetilde X$ is homotopy equivalent to a fiber of $X\to S^1$, hence has the homotopy type of a finite CW complex.

Recall that $h\colon X\to E$ induces an isomorphism on $\pi_1$ and let $\widetilde E\to E$ be the infinite cyclic covering, induced by the kernel of $\pi_1(E)\cong \pi_1(X)\to \pi_1(S^1)$.
Then $h$ induces a natural map $\tilde h\colon \widetilde X\to \widetilde E$. 
This map is not a $\Z$-homology fiber bundle. 
Indeed, $R^{4}\widetilde h_\ast \Z$ fails to be locally constant at every
point of $\widetilde E$ lying above the branch locus of $p_C\colon C\to E$, and this is an infinite set. 
Proposition \ref{prop:D-finite} therefore implies that $H^{i}(\widetilde X,\Z)$ is not finitely generated for some $i$.
In particular, $\widetilde X$ does not have the homotopy type of a
finite CW complex. 
This contradiction completes the proof of the theorem.
%
\end{proof}


\subsection{Foliated geometry and dynamics of the example}\label{sec:foliated-geometry}

We now describe the foliated geometry underlying the example of Section \ref{sec:example}. Recall that $h\colon X\to E$ is the Albanese morphism. Every nonzero holomorphic one-form on $X$ is of the form $\omega=h^*\eta$, where $\eta$ is a nonzero holomorphic one-form on $E$. On $X\setminus Z(\omega)$, let $\mathcal F_\omega$ denote the holomorphic foliation defined by $\ker\omega$.

The period homomorphism $\operatorname{Per}_\omega\colon\pi_1(X)\to\C$, given by $\gamma\mapsto\int_\gamma\omega$, determines the transverse translation structure of $\mathcal F_\omega$. Its exponentials $\chi_t(\gamma)=\exp(t\int_\gamma\omega)$, $t\in\C$, define rank-one local systems in the direction $\omega$. For general $t$, these characters have infinite image, and hence contain information beyond that detected by finite \'etale covers.

Since $\pi_1(X)\simeq\Z^2$, every semisimple complex local system on $X$ is a direct sum of rank-one local systems. Thus Proposition~\ref{prop:vanishing} shows that the Aomoto complex is exact in every nonzero direction for all semisimple local systems, including the characters arising from the transverse periods of $\omega$.

For $c\in\C^*$, set $\alpha_c=\operatorname{Re}(c\omega)$ and $\beta_c=\operatorname{Re}(c\eta)$. We denote by
$$
\Gamma_c=\left\{\int_\gamma\beta_c\mid \gamma\in H_1(E,\Z)\right\}\subset\R
$$
the group of periods of $\beta_c$.

\begin{proposition}\label{prop:foliated-geometry}
With the notation above, the following statements hold.
\begin{enumerate}[(i)]
\item Over the regular values of $h$, the fibers of $h$ are compact leaves of $\mathcal F_\omega$.
\item Every regular leaf of $\ker\alpha_c$ is an immersed Levi-flat hypersurface \cite{BrunellaLevi}, and its Levi foliation is the restriction of $\mathcal F_\omega$.
\item If $\Gamma_c$ is discrete, then, after rescaling, there is a submersion
$
q_c\colon E\longrightarrow S^1
$
such that $q_c^*d\vartheta=\beta_c$. For $H_c=q_c\circ h$, every regular fiber of $H_c$ is a compact Levi-flat leaf of $\ker\alpha_c$, fibered over a circle by compact leaves of $\mathcal F_\omega$.
\item If $\Gamma_c$ is dense in $\R$ and $\ell$ is a leaf of $\ker\beta_c$ which does not contain a critical value of $h$, then $h^{-1}(\ell)$ is a leaf of $\ker\alpha_c$ and is dense in $X$. In particular, this holds for all but finitely many leaves of $\ker\beta_c$.
\end{enumerate}
Moreover,
$
\operatorname{Sing}(\ker\alpha_c)=Z(\omega)=\operatorname{Sing}(\mathcal F_\omega)=\operatorname{Crit}(h).
$
\end{proposition}

\begin{proof}
Since $\eta$ is nowhere vanishing, $Z(\omega)$ is the critical locus of $h$ and $\ker\omega=\ker dh$ away from $Z(\omega)$. Corollary~\ref{cor:sum} also gives
$
H^0(h^{-1}(t),\bQ)=\bQ
$
for every $t\in E$. Thus the fibers of $h$ are connected, and (i) follows.

Since
$
\ker\omega=\ker\alpha_c\cap J\ker\alpha_c,
$
the complex tangent bundle of every regular leaf of $\ker\alpha_c$ is $\mathcal F_\omega$. This proves (ii).

The form $\beta_c$ defines a linear foliation on the real two-torus underlying $E$. If $\Gamma_c$ is discrete, a nonzero multiple of $\beta_c$ has primitive integral periods. Integration gives a submersion $q_c\colon E\to S^1$ with $q_c^*d\vartheta=\beta_c$, as in Tischler's theorem \cite{T}. Since $\alpha_c=h^*\beta_c$, we have
$
H_c^*d\vartheta=\alpha_c.
$
This proves (iii).

If $\Gamma_c$ is dense in $\R$, every leaf of $\ker\beta_c$ is dense in $E$ \cite{CandelConlon}. Let $B\subset E$ be the finite set of critical values of $h$ and let $\ell$ be a leaf disjoint from $B$. Since $h$ is a smooth fibration with connected fibers over $E\setminus B$, $h^{-1}(\ell)$ is a leaf of $\ker\alpha_c$. Let $U\subset E\setminus B$ be any sufficiently small disc over which $h$ is a product. Since $\ell\cap U$ is dense in $U$, the closure of $h^{-1}(\ell)$ contains $h^{-1}(U)$.
Since this holds for every such $U$, the closure contains $h^{-1}(E\setminus B)$, and hence $h^{-1}(\ell)$ is dense in $X$. Since every point of $B$ lies on one leaf of $\ker\beta_c$, this proves (iv).

Finally, $\operatorname{Re}(c\omega)$ vanishes at a point if and only if $\omega$ does. Together with $Z(\omega)=\operatorname{Crit}(h)$, this proves the last assertion.
\end{proof}

\begin{remark}
Propositions~\ref{prop:vanishing} and~\ref{prop:foliated-geometry} show that the semisimple Aomoto condition does not detect the singularities or the transverse dynamics of the foliation defined by a holomorphic one-form. Indeed, for every semisimple local system $V$ on $X$ and every nonzero $\omega\in H^1(X,\C)$, the corresponding Aomoto complex is exact, whereas for a nonzero holomorphic one-form $\omega=h^*\eta$,
$
\operatorname{Sing}(\mathcal F_\omega)=Z(\omega)=\operatorname{Crit}(h)\neq\varnothing.
$
Moreover, if $\rho_\omega\colon\pi_1(X)\to\C$ denotes the period homomorphism, then the characters $\gamma\mapsto\exp(t\rho_\omega(\gamma))$, $t\in\C$, generally have infinite image. Such characters cannot be reduced to finite monodromy by passing to a finite \'etale cover. Thus the semisimple condition contains information beyond that coming from finite \'etale covers. It also does not distinguish the compact and dense behavior of the real foliations $\ker\operatorname{Re}(c\omega)$, which is governed by the period group $\Gamma_c$.
\end{remark}

\section*{AI disclosure}
The mathematical content of this paper is the work of the authors. ChatGPT was used to assist with proofreading and to improve the clarity of the exposition.
\section*{Acknowledgements} 
We are grateful to Javier Fern\'andez de Bobadilla, Feng Hao, Claudio Llosa--Isenrich, Simone Noja, Simon Pietig,  Pierre Py,  and Yang Su for helpful conversations and references.


MC is partially supported by the Universit\`a degli Studi di Bari Aldo Moro and is a member of INdAM-GNSAGA.
Partial  financial support  to JK   was provided  by the Simons Foundation   (grant number SFI-MPS-MOV-00006719-02).
SS received funding from the European Research Council (ERC) under the European Union’s Horizon 2020 research and innovation programme under grant agreement No.~948066 (ERC-StG RationAlgic).
The research was partly conducted in the framework of the DFG-funded research training group RTG 2965: From Geometry to Numbers, Project number 512730679. 

\end{document}